\newtheorem{theorem}{Theorem}[section]
\newtheorem{lemma}[theorem]{Lemma}
\newtheorem{proposition}[theorem]{Proposition}
\theoremstyle{remark}
\newtheorem{remark}[theorem]{Remark}
\renewenvironment{proof}[1][Proof]{ {\itshape \noindent {#1.}} }{$\Box$
\medskip}
\numberwithin{equation}{section}
\newcommand{\R}{\mathbb{R}}
\newcommand{\CC}{\mathbb{C}}
\newcommand{\Pb}{\mathbb{P}}
\newcommand{\E}{\mathbb{E}}
\newcommand{\F}{\mathcal{F}}
\newcommand{\B}{\mathcal{B}}
\newcommand{\Li}{\mathcal{L}}
\newcommand{\G}{\mathcal{G}}
\newcommand{\A}{\mathcal{A}}
\newcommand{\Tr}{\mathrm{Tr}}
\newcommand{\C}{\mathcal{C}}
\newcommand{\K}{\mathcal{K}}
\newcommand{\V}{\mathcal{V}}
\newcommand{\Q}{\mathcal{Q}}
\newcommand{\eps}{\varepsilon}
\def\les{\lesssim}
\begin{document}
\title{Radiative Transport Limit of Dirac Equations with Random Electromagnetic Field}
\author{Yu Gu\thanks{Department of Applied Physics \& Applied
Mathematics, Columbia University, New York, NY 10027 (yg2254@columbia.edu; gb2030@columbia.edu)}  \and Guillaume Bal\footnotemark[1]}
\maketitle

\begin{abstract}
This paper concerns the kinetic limit of the Dirac equation with random electromagnetic field. We give a detailed mathematical analysis of the radiative transport limit for the phase space energy density of solutions to the Dirac equation. Our derivation is based on a martingale method and a perturbed test function expansion. This requires the electromagnetic field to be a space-time random field.

The main mathematical tool in the derivation of the kinetic limit is the matrix-valued Wigner transform of the vector-valued Dirac solution. The major novelty compared to the scalar (Schr\"odinger) case is the proof of convergence of cross-modes to 0 weakly in space and almost surely in probability. The propagating modes are shown to converge in an appropriate strong sense to their deterministic limit.
\end{abstract}

\section{Introduction}
The Dirac equation is the relativistic version of the Schr{\"o}dinger equation and describes very fast electrons propagating in an electromagnetic field. In this paper, we consider the semiclassical limit of the Dirac equation when the electromagnetic field is random and time-dependent.

The problem falls into the category of high frequency wave propagating in highly heterogeneous media, which has been modeled by radiative transfer equation in many areas, e.g., quantum waves in semiconductors, electromagnetic waves in turbulent atmospheres and plasmas, underwater acoustic waves, elastic waves in the Earth's crust. Such kinetic models account for the multiple interactions of wave fields with the fluctuation of the underlying media. In the so-called weak-coupling limit we will consider,  waves propagate over distances that are large compared to the typical wavelength in the system and the fluctuations have weak amplitude with correlation length comparable to the wavelength. Most of the derivations of radiative transfer equation are based on formal expansions, e.g., a systematic method to derive kinetic equations from symmetric first-order hyperbolic systems, including systems of acoustic and elastic equations, in the weak-coupling limit has been presented in \cite{ryzhik1996transport} and extended in various forms in \cite{bal2005kinetics,bal1999radiative,guo1999transport,powell2005transport}.

Mathematically rigorous derivations are notoriously difficult in the setting of spatially varying randomness. Most proofs have been obtained for the Schr\"odinger equation with time-independent Gaussian potential, see \cite{erdHos2000linear,spohn1977derivation}, with an extension to (discrete) wave equations in \cite{lukkarinen2007kinetic}, and are based on the Neumann series expansion for the solution to the Schr{\"o}dinger equation and appropriate estimates that allow passage to the limit. Such techniques were also used in \cite{BKR-ARMA-11} to obtain the random mixing of the phase of the Schr\"odinger solution and in \cite{B-CMP-09,GB2012schrodingerRandom,GB2012schrodingerDeterministic,KN-PA-10} to analyze limits of solutions with low-frequency initial conditions and large, high-frequency, random potentials.

The derivation of kinetic limits is much simplified in the setting of time-dependent random coefficients and does not require the aforementioned infinite Neumann series expansion. Assuming a Markovian structure for the random potential enables us to use a martingale method and a suitable perturbed test functions expansion. A limit theorem for one dimensional waves where such methods are used is given in \cite{papanicolaou1994functional} and more general ones in \cite{blankenship1978stability}. The same approach has been applied to Schr{\"o}dinger equation in different settings, see \cite{bal2002radiative,bal2002self,fannjiang2005self,gomez2011,olivier2012}.

The markovianity of the random coefficient simplifies derivations but is not necessary as was shown in \cite{fannjiang2005self}. In \cite{breteaux2011geometric}, a geometric approach is applied to more general initial data than in \cite{erdHos2000linear}. A renewal of the random field is used to get the appropriate mixing properties during the evolution.  In \cite{poupaud2003classical}, the authors consider random potentials that are correlated in time with finite range. They proved the semiclassical limit of Schr{\"o}dinger equation by pure PDE techniques.

For systems of equations such as the Dirac system or the linear hyperbolic systems considered in \cite{ryzhik1996transport}, no rigorous results have been established even for time-dependent potentials; see also \cite{bal2010kinetic} for a recent review on the derivation of limiting models for wave and particle propagation in random media. In this setting, the energy matrix has to be decomposed into different propagating and non-propagating modes, which behave quite differently in the high-frequency limit. We focus here on the Dirac equation although most of the derivations carried out in this paper extend to the framework of linear hyperbolic systems, which will be considered elsewhere. It should also be mentioned that the energy bands in the Dirac equation are degenerate, which is distinct from the scalar (Schr\"odinger) case.

The rest of the paper is organized as follows. We describe the problem setting and state our main results in Section \ref{sec:mainresults}. Next, we sketch the outline of the proof and discuss in detail the construction of test functions in Section \ref{sec:outlineproof}. In Section \ref{sec:proof}, we then prove the convergence of different modes. The case of slower fluctuations in time, which allow us to recover limiting kinetic models with elastic scattering, is briefly discussed in Section \ref{sec:slowerTime}. Conclusions and some further discussions are finally presented in Section \ref{sec:conclusion}.

\section{Main results}
\label{sec:mainresults}
In this section, we first describe our setting, next introduce the so-called Wigner transform as our main tool and then present our construction of the random electromagnetic field. We finally state the main results of the paper.

The Dirac equation in three dimensions of space reads
\begin{equation}
\eps\partial_t \Psi^\eps+P(x,\eps D_x)\Psi^\eps =0, \ \ x \in \R^3, t\in \R,
\label{eq:diracEq}
\end{equation}
with the differential operator $P(x,D)$  defined as:
\begin{equation}
P(x,\xi)=i\left( \sum_{k=1}^3 \gamma^0\gamma^k(\xi_k-eA_k(x))+m_0c\gamma^0-eA_0(x)I_4\right).
\end{equation}
Here $\Psi^\eps=\Psi^\eps(t,x)\in \CC^4$ is the wave function. $\eps=\hbar$ is our small parameter and stands for the Planck constant. $e$ is the unit charge, $m_0$ is the electron's rest mass, $c$ is the velocity of light, and $A_k(x)\in \R, k=0,1,2,3$ are the components of the prescribed electromagnetic field. In particular, $A_0$ is the electric potential and $(A_1,A_2,A_3)$ is the magnetic potential vector.

$\gamma^k\in \CC^{4\times4}, k=0,1,2,3$ are the $4\times 4$ Dirac matrices, which are closely related to the $2\times 2$ Pauli matrices. Their elements are $0,1,i$, and they satisfy
\begin{eqnarray*}
&&{\gamma^0}^*=\gamma^0, {\gamma^k}^*=-\gamma^k, k=1,2,3, (\gamma^0\gamma^k)^*=\gamma^0\gamma^k, \\
&&\gamma^m\gamma^n+\gamma^n\gamma^m=0, m\neq n, {\gamma^0}^2=I_4, {\gamma^k}^2=-I_4, k=1,2,3.
\end{eqnarray*}
In our proofs, the explicit form of the Dirac matrices does not play any special role.

The relativistic current density $J^\eps$ is a $4$ dimensional vector with elements $J_k^\eps$ given by
\begin{equation*}
J_k^\eps={\Psi^\eps}^*\gamma^0\gamma^k\Psi^\eps,
\end{equation*}
and the relativistic position density $n(t,x)$ is given by
\begin{equation}
n^\eps(t,x)=J_0^\eps(t,x)={\Psi^\eps}^*(t,x)\Psi^\eps(t,x).
\end{equation}

We investigate the limiting behavior of the solution to \eqref{eq:diracEq} as $\eps \to 0$. Given the conservation of $\int_{\R^d}n^\eps(t,x)dx$ and the fact that $n^\eps(t,x)$ does not admit a closed-formed equation,
the quantity we are interested in proves to be the Wigner transform of $\Psi^\eps$, which we introduce next. For more results on the Wigner transform, see \cite{gerard1997homogenization}.

\subsection{Wigner transform and pseudo-differential calculus}
We introduce the Wigner transform and some pseudo-differential calculus that is to be used.

The matrix-valued Wigner transform of two spatially-dependent $d$ dimensional vector fields $u(x)$ and $v(x)$ is defined as
\begin{equation}
W[u,v](x,\xi)=\int_{\R^d}\frac{1}{(2\pi)^d}e^{i\xi\cdot y}u(t,x-\frac{\eps y}{2})v^*(x+\frac{\eps y}{2})dy,
\end{equation}
where $v^*$ is the transposition and possible complex conjugate of $v$. It may be seen as the inverse Fourier transform of the two point correlation function of $u(x)$ and $v(x)$, where we define Fourier transform using the convention $\hat{f}(\xi)=\int_{\R^d} e^{-i\xi\cdot x}f(x) dx$. We check that:
\begin{equation}
(W[u,u](x,\xi))^*=W[u,u](x,\xi).
\end{equation}

We also verify that
\begin{equation}
\int_{\R^d} W[u,v](t,x,\xi)d\xi=(u v^*)(t,x),
\end{equation}
and this allows us to interpret the Wigner transform as the energy density in phase space, although the Wigner transform is positive only in the limit $\eps\to0$ \cite{gerard1997homogenization}.

We recall from \cite{bal2005kinetics} some simple results about pseudo-differential calculus that are needed in this paper.
\begin{proposition}
let $P(\eps D)$ be a matrix-valued pseudo-differential operator defined by
\begin{equation*}
P(\eps D)u(x)=\int_{\R^d} e^{ip\cdot x}P(i\eps p)\hat{u}(p)\frac{dp}{(2\pi)^d}.
\end{equation*}
Then we have
\begin{equation*}
W[P(\eps D)u,v](x,\xi)=P(i\xi+\frac{\eps D}{2})W[u,v](x,\xi),
\end{equation*}
and
\begin{equation*}
W[u,P(\eps D)v](x,\xi)=W[u,v](x,\xi)P^*(i\xi-\frac{\eps D}{2}),
\end{equation*}
where $W[u,v](x,\xi)P^*(i\xi-\eps D/2)$ is defined as the inverse Fourier transform $\F^{-1}_{p\to x}$ of the matrix $\hat{W}[u,v](p,\xi)P^*(i\xi-i\eps p/2)$.
\label{prop:pseudo1}
\end{proposition}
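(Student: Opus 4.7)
The plan is to reduce both identities to direct computations on the Fourier side. I would first substitute the Fourier representations $u(x)=\int e^{ik\cdot x}\hat u(k)\,dk/(2\pi)^d$ and $v^{*}(x)=\int e^{-il\cdot x}\hat v^{*}(l)\,dl/(2\pi)^d$ (with $\hat v^{*}(l):=(\hat v(l))^{*}$) into the defining integral of $W[u,v](x,\xi)$. The $y$-integral collapses to a $\delta$-function forcing $\xi=\eps(k+l)/2$. Using this $\delta$ and the change of variables $k=\xi/\eps+p/2$, $l=\xi/\eps-p/2$ produces the workhorse identity
\begin{equation*}
W[u,v](x,\xi)=\eps^{-d}\int\frac{dp}{(2\pi)^{2d}}\,e^{ip\cdot x}\,\hat u\!\left(\tfrac{\xi}{\eps}+\tfrac{p}{2}\right)\hat v^{*}\!\left(\tfrac{\xi}{\eps}-\tfrac{p}{2}\right),
\end{equation*}
which in particular says $\hat W[u,v](p,\xi)=\eps^{-d}(2\pi)^{-d}\hat u(\tfrac{\xi}{\eps}+\tfrac{p}{2})\,\hat v^{*}(\tfrac{\xi}{\eps}-\tfrac{p}{2})$.

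Next, for the first identity I would use the defining property $\widehat{P(\eps D)u}(k)=P(i\eps k)\hat u(k)$ to replace $\hat u(\tfrac{\xi}{\eps}+\tfrac{p}{2})$ in the displayed formula by $P\bigl(i\eps(\tfrac{\xi}{\eps}+\tfrac{p}{2})\bigr)\hat u(\tfrac{\xi}{\eps}+\tfrac{p}{2})=P(i\xi+i\eps p/2)\,\hat u(\tfrac{\xi}{\eps}+\tfrac{p}{2})$, the factor $P$ appearing to the left of the $\hat u\,\hat v^{*}$ product. This gives
\begin{equation*}
\hat W[P(\eps D)u,v](p,\xi)=P(i\xi+i\eps p/2)\,\hat W[u,v](p,\xi),
\end{equation*}
and an inverse Fourier transform in $p\to x$ is exactly the definition of the left-action of $P(i\xi+\eps D/2)$ on $W[u,v]$ as spelled out in the statement. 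The second identity is obtained by the symmetric calculation: from $\widehat{(P(\eps D)v)^{*}}(l)=\hat v^{*}(l)\,P^{*}(i\eps l)$ one inserts $P^{*}(i\xi-i\eps p/2)$ to the right of $\hat v^{*}(\tfrac{\xi}{\eps}-\tfrac{p}{2})$, and the same inverse Fourier transform yields the stated right-action formula.

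I do not expect any real obstacle beyond Fourier bookkeeping. The one point requiring care is the noncommutativity of $P$ with the Wigner-transform matrix: because $P$ is matrix-valued, the derivation naturally produces a left-action when $P(\eps D)$ is applied to $u$ and a right-action when it is applied to $v^{*}$, which is precisely the asymmetric form in which the proposition is phrased. A harmless technicality is the Fubini-type interchange of the $y,k,l$ integrals and the use of the $\delta$-function identity; this is rigorous when $u,v$ are Schwartz and extends to the ambient function spaces by density, so no serious difficulty arises.
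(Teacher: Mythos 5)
Your proof is correct. Note that the paper itself states Proposition \ref{prop:pseudo1} without proof (it is recalled from the reference on kinetics of scalar wave fields), so there is no internal argument to compare against; your Fourier-side computation — deriving $\hat W[u,v](p,\xi)=\eps^{-d}(2\pi)^{-d}\hat u(\tfrac{\xi}{\eps}+\tfrac{p}{2})\hat v^{*}(\tfrac{\xi}{\eps}-\tfrac{p}{2})$ and then inserting the symbol $P(i\eps k)$ on the left for $u$ and $P^{*}(i\eps l)$ on the right for $v^{*}$ — is the standard and complete way to verify it, and you correctly track the matrix ordering and the sign conventions $i\eps(\tfrac{\xi}{\eps}\pm\tfrac{p}{2})=i\xi\pm i\eps p/2$.
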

\begin{proposition}
let $V(x)$ be a real matrix-valued function, then we have
\begin{equation*}
W[V(\frac{x}{\eps})u,v](x,\xi)=\int_{\R^d}\frac{e^{ix\cdot p/\eps}}{(2\pi)^d}\hat{V}(p)W[u,v](x,\xi-\frac{p}{2})dp,
\end{equation*}
and
\begin{equation*}
W[u,V(\frac{x}{\eps})v](x,\xi)=\int_{\R^d}\frac{e^{ix\cdot p/\eps}}{(2\pi)^d}W[u,v](x,\xi+\frac{p}{2})\hat{V}^t(p)dp,
\end{equation*}
where $\hat{V}$ is the Fourier transform of $V$ component by component.
\label{prop:pseudo2}
\end{proposition}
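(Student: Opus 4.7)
My plan is to derive both identities by direct computation: unpack the definition of the Wigner transform, replace $V$ by its Fourier representation, exchange the order of integration, and recognize the resulting inner integral as a Wigner transform with a shifted frequency variable.

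For the first identity, I would start from
\begin{equation*}
W[V(x/\eps)u,v](x,\xi) = \int_{\R^d}\frac{e^{i\xi\cdot y}}{(2\pi)^d}\, V\!\left(\frac{x}{\eps}-\frac{y}{2}\right) u\!\left(x-\frac{\eps y}{2}\right) v^*\!\left(x+\frac{\eps y}{2}\right) dy,
\end{equation*}
and apply entrywise Fourier inversion to $V$: $V(x/\eps-y/2)=(2\pi)^{-d}\int_{\R^d}e^{ip\cdot(x/\eps-y/2)}\hat V(p)\,dp$. Assuming Schwartz regularity on the inputs (and extending by density afterwards), Fubini permits exchanging the $y$- and $p$-integrals. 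Collecting the exponentials in $y$ produces $e^{i(\xi-p/2)\cdot y}$, and the remaining $y$-integral is exactly the definition of $W[u,v](x,\xi-p/2)$, which yields the first formula.

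For the second identity, the strategy is identical, but one must track the matrix ordering. The factor coming from $V$ sits to the right of $v^*$ as $V^*((x+\eps y/2)/\eps)=V^t(x/\eps+y/2)$, using that $V$ is real. Fourier inversion then supplies $\hat V^t(q)$ (on the right of the integrand) together with a factor $e^{+iq\cdot y/2}$ instead of $e^{-ip\cdot y/2}$. This shifts the argument of the inner Wigner transform to $\xi+q/2$ and produces the stated formula with $\hat V^t(q)$ on the right of $W[u,v](x,\xi+q/2)$.

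The only delicate point is bookkeeping: the sign of the frequency shift $\pm p/2$, the placement of $\hat V$ versus $\hat V^t$, and the complex-conjugation convention encoded in $v^*$. Everything else—Fubini on Schwartz class, and a density argument to upgrade to the relevant function spaces—is entirely routine.
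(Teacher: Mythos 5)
Your computation is correct: Fourier-inverting $V$ entrywise with the paper's convention, applying Fubini, and recognizing the inner $y$-integral as $W[u,v](x,\xi\mp p/2)$ gives both identities, and you track the two delicate points (the sign of the frequency shift and the placement of $\hat V$ versus $\hat V^t$ via $V^*=V^t$ for real $V$) correctly. The paper states this proposition without proof, recalling it from \cite{bal2005kinetics}; your direct calculation is exactly the standard argument behind it.
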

\begin{remark}
Throughout the paper, we assume that spatial dimension $d=3$ although we occasionally use ``$d$" for expressions that hold independent of dimension.
\end{remark}

\subsection{The random field}
The $A_k(x)$ appearing in \eqref{eq:diracEq} are components of a random electromagnetic field, which we assume to be time-dependent and have mean zero. The non mean zero case can be handled similarly.

We follow the same construction of the random field as in \cite{bal2002radiative,bal2002self}. $\V$ is the set of measures of bounded total variation with support inside a ball $B_L={|p|\leq M}$:
\begin{equation*}
\V=\{\hat{V}=(\hat{V}_0,\hat{V}_1,\hat{V}_2,\hat{V}_3): \int_{\R^d}|d\hat{V}_k|\leq C, \ \ \mbox{supp}\ \ \hat{V}_k\subset B_L, \hat{V}_k(p)=\hat{V}_k^*(-p),k=0,1,2,3\}.
\end{equation*}
Let $\tilde{A}(t,p)=(\tilde{A}_0(t,p)dp,\tilde{A}_1(t,p)dp,\tilde{A}_2(t,p)dp,\tilde{A}_3(t,p)dp)$ be a mean-zero Markov process on $\V$ with generator $\Q$. The time-dependent random field $A_k(t,x)$ is given by
\begin{equation}
A_k(t,x)=\int_{\R^d} \frac{dp\tilde{A}_k(t,p)}{(2\pi)^d}e^{ip\cdot x}
\label{eq:defVtx}
\end{equation}
 and is real and uniformly bounded. We assume $A_k(t,x)$ is stationary in $t$ and $x$ and the correlation functions are defined by
 \begin{equation*}
 R_{mn}(t,x):=\E\{A_m(t+s,x+y)A_n(s,y)\}, m,n=0,1,2,3.
 \end{equation*}
  Furthermore, we have
 \begin{equation*}
 \E\{\tilde{A}_m(t+s,p)dp\tilde{A}_n(s,q)dq\}=(2\pi)^d\tilde{R}_{mn}(t,p)\delta(p+q)dpdq,
 \end{equation*}
where the power spectrum $\tilde{R}_{mn}$ is the Fourier transform of $R_{mn}(t,x)$ in $x$:
\begin{equation*}
\tilde{R}_{mn}(t,p)=\int_{\R^d} R_{mn}(t,x)e^{-ix\cdot p}dx.
\end{equation*}
For simplicity we assume that $\tilde{R}_{mn}(t,p)\in \mathcal{S}(\R\times\R^d)$ and define the space-time Fourier transform $\hat{R}_{mn}(\omega,p)$ as
\begin{equation*}
\hat{R}_{mn}(\omega,p )=\int_\R \tilde{R}_{mn}(t,p)e^{-i\omega t}dt.
\end{equation*}

We assume that the generator $\Q$ is a bounded operator on $L^\infty (\V)$ with a unique invariant measure $\pi(\hat{V})$, i.e., $\Q^*\pi=0$, and there exists $\alpha>0$ such that if $\langle g,\pi\rangle=0$, then
\begin{equation}
\|e^{t\Q}g\|_{L^\infty (\V)}\leq C\|g\|_{L^\infty(\V)}e^{-\alpha t}.
\label{eq:speGap}
\end{equation}
The simplest example of a generator with gap in the spectrum and invariant measure $\pi$ is a jump process on $\V$ where
\begin{equation*}
\Q g(\hat{V})=\int_\V g(\hat{V}_1)d\pi (\hat{V}_1)-g(\hat{V}), \int_\V d\pi (\hat{V})=1.
\end{equation*}
Given \eqref{eq:speGap}, the Fredholm alternative holds for the Poisson equation \begin{equation*}
\Q f=g\end{equation*} provided that $\langle g,\pi\rangle=0$. It has a unique solution $f$ with $\langle f,\pi\rangle =0$ and $\|f\|_{L^\infty(\V)}\leq C\|g\|_{L^\infty(\V)}$. The solution is given explicitly by
\begin{equation*}
f(\hat{V})=-\int_0^\infty dr e^{r\Q}g(\hat{V}),\end{equation*}
and the integral converges absolutely because of \eqref{eq:speGap}.

\subsection{Main theorem}
Before stating the main results we first derive the equation satisfied by the Wigner transform.

Recall the equation
\begin{equation*}
\eps\partial_t \Psi^\eps+P(x,\eps D_x)\Psi^\eps =0.
\end{equation*}
We replace $A_k(t,x)$ with $\sqrt{\eps}A_k(\frac{t}{\eps},\frac{x}{\eps})$ in the weak coupling limit, so the equation for $\Psi^\eps$ becomes
\begin{equation}
\eps \partial_t \Psi^\eps+
i\left( \sum_{k=1}^3 \gamma^0\gamma^k(\eps D_k-\sqrt{\eps}eA_k(\frac{t}{\eps},\frac{x}{\eps}))+m_0c\gamma^0-\sqrt{\eps}eA_0(\frac{t}{\eps},\frac{x}{\eps})I_4\right)\Psi^\eps=0.
\label{eq:diracEqnew}
\end{equation}
Define $W_\eps(t,x,\xi)=W[\Psi^\eps(t,.),\Psi^\eps(t,.)](x,\xi)$, and by Proposition \ref{prop:pseudo1}, \ref{prop:pseudo2} we can derive the equation satisfied by $W_\eps$
\begin{equation}
\begin{aligned}
\eps \partial_t W_\eps+&\sum_{k=1}^3 c\gamma^0\gamma^k P_k(i\xi+\frac{\eps D}{2}) W_\eps +\sum_{k=1}^3 W_\eps P_k^*(i\xi-\frac{\eps D}{2}) c\gamma^0\gamma^k+im_0c^2(\gamma^0 W_\eps-W_\eps \gamma^0)\\
-&ie\sqrt{\eps}\sum_{k=0}^3\gamma^0\gamma^k\K_\eps^k W_\eps+ie\sqrt{\eps}\sum_{k=0}^3\tilde{\K}_\eps^k W_\eps\gamma^0\gamma^k=0,
\label{eq:WignerEq}
\end{aligned}
\end{equation}
where the symbol $P_k(\xi)=\xi_k$, and the operators $\K_\eps^k, \tilde{\K}_\eps^k$ are defined as
\begin{eqnarray*}
\K_{\eps}^k\lambda(t,x,\xi)=\int_{\R^d}\frac{dp\tilde{A}_k(\frac{t}{\eps},p)}{(2\pi)^d}e^{ix\cdot p/\eps}\lambda(t,x,\xi-\frac{p}{2}),\\
\tilde{\K}_{\eps}^k\lambda(t,x,\xi)=\int_{\R^d}\frac{dp\tilde{A}_k(\frac{t}{\eps},p)}{(2\pi)^d}e^{ix\cdot p/\eps}\lambda(t,x,\xi+\frac{p}{2}).
\end{eqnarray*}

We define the $L^2$ inner product for matrix valued functions as follows:
\begin{equation}
\langle F(y),G(y)\rangle=\int\Tr(F^*G)(y)dy.
\end{equation}

The Dirac equation \eqref{eq:diracEqnew} preserves the $L^2$ norm of $\Psi^\eps$, and by the property of Fourier transform, we have $\forall t\in [0,T]$,
\begin{equation}
\|W_\eps(t)\|_{L^2(\R^{2d})}=(2\pi\eps)^{-d/2}\|\Psi^\eps(t)\|_{L^2(\R^d)}^2=(2\pi\eps)^{-d/2}\|\Psi^\eps(0)\|_{L^2(\R^d)}^2.
\end{equation}
Thus by choosing appropriate initial condition $\Psi^\eps(0)$ such that $\|\Psi^\eps(0)\|_{L^2(\R^d)}^2$ is of order $\eps^{d/2}$, we have $\|W_\eps(t)\|_{L^2(\R^{2d})}$ uniformly bounded for $t\in [0,T]$ and $\tilde{A}\in \V$. Moreover, we assume that $W_\eps(0,x,\xi)$ converges weakly as $\eps \to 0$ to $W_0(0,x,\xi)\in L^2(\R^{2d})$ without restriction to a subsequence.

Expanding $W_\eps=W_0+\sqrt{\eps}W_{1,\eps}+\eps W_{2,\eps}+\ldots$, dividing \eqref{eq:WignerEq} by $\eps$ and expanding in $\eps$, by setting the $1/\eps$ order term to be zero, we obtain that
\begin{equation*}
\sum_{k=1}^3 c\gamma^0\gamma^kP_k(i\xi)W_0+W_0\sum_{k=1}^3 P_k^*(i\xi)c\gamma^0\gamma^k+im_0c^2(\gamma^0W_0-W_0\gamma^0)=0.
\end{equation*}
This enables us to define the dispersion matrix
\begin{equation}
Q(\xi)=\sum_{k=1}^3 \gamma^0\gamma^k \xi_k+m_0c\gamma^0,
\label{eq:DisperMat}
\end{equation}
and we expect the limit to satisfy that
\begin{equation}
Q(\xi)W_0(t,x,\xi)=W_0(t,x,\xi)Q(\xi).
\label{eq:dispersionrelation}
\end{equation}

By the property of $\gamma^0\gamma^k$, we know $Q$ is Hermitian, and $Q^2=(m_0^2c^2+|\xi|^2)I_4$. Let
\begin{equation}
\lambda_\pm(\xi)=\pm\sqrt{m_0^2c^2+|\xi|^2},
\end{equation}
be the eigenvalues of $Q$ corresponding to the energy levels of electrons and positrons, respectively.  The orthonormal eigenvectors are
\begin{eqnarray*}
x_1(\xi)=\left(\begin{array}{c} 0 \\ \frac{m_0c}{\sqrt{2\lambda_+(\lambda_+-\xi_3)}} \\ \frac{\xi_1-i\xi_2}{\sqrt{2\lambda_+(\lambda_+-\xi_3)}} \\ \sqrt{\frac{\lambda_+-\xi_3}{2\lambda_+}} \end{array} \right),\qquad
x_2(\xi)=\left(\begin{array}{c} \sqrt{\frac{\lambda_+-\xi_3}{2\lambda_+}} \\ -\frac{\xi_1+i\xi_2}{\sqrt{2\lambda_+(\lambda_+-\xi_3)}} \\ \frac{m_0c}{\sqrt{2\lambda_+(\lambda_+-\xi_3)}} \\ 0 \end{array}\right),\\
y_1(\xi)=\left(\begin{array}{c} \frac{m_0c}{\sqrt{2\lambda_+(\lambda_+-\xi_3)}} \\ 0 \\ -\sqrt{\frac{\lambda_+-\xi_3}{2\lambda_+}} \\ \frac{\xi_1+i\xi_2}{\sqrt{2\lambda_+(\lambda_+-\xi_3)}}\end{array}\right),\qquad
y_2(\xi)=\left(\begin{array}{c} \frac{\xi_1-i\xi_2}{\sqrt{2\lambda_+(\lambda_+-\xi_3)}} \\ \sqrt{\frac{\lambda_+-\xi_3}{2\lambda_+}} \\ 0 \\ -\frac{m_0c}{\sqrt{2\lambda_+(\lambda_+-\xi_3)}}\end{array}\right),
\end{eqnarray*}
where $x_1,x_2$ correspond to $\lambda_+$ and $y_1,y_2$ correspond to $\lambda_-$.
\begin{remark}
The explicit form of the eigenvectors do not affect our results.
\end{remark}
Because $\{x_1,x_2,y_1,y_2\}$ is a basis in $\CC^4$, we can decompose $W_\eps$ as
\begin{equation}
W_\eps=\sum_{i,j=1,2} a_{ij}^\eps x_ix_j^*+\sum_{i,j=1,2} b_{ij}^\eps y_iy_j^*+\sum_{i,j=1,2} c_{ij}^\eps x_iy_j^*+\sum_{i,j=1,2} d_{ij}^\eps y_ix_j^*.
\end{equation}
On one hand, we have $W_\eps^*=W_\eps$, so
\begin{eqnarray*}
&&a_{ii}^\eps\in \R, i=1,2 \ \ a_{12}^\eps=\overline{a_{21}^\eps},\\
&&b_{ii}^\eps\in \R, i=1,2 \ \ b_{12}^\eps=\overline{b_{21}^\eps}.
\end{eqnarray*}
On the other hand, since $W_\eps$ is uniformly bounded in $L^2$, we have that all the coefficients $a_{ij}^\eps, b_{ij}^\eps, c_{ij}^\eps, d_{ij}^\eps$ belong to $\C([0,T];L^2(\R^{2d}))$. The dispersion relation \eqref{eq:dispersionrelation} implies that $c_{ij}^\eps,d_{ij}^\eps$ should converge to zero as $\eps \to 0$. Thus we write the limit
\begin{equation}
W_0=\sum_{i,j=1,2} a_{ij}x_ix_j^*+\sum_{i,j=1,2}b_{ij} y_iy_j^*.
\end{equation}
We denote by $\Pi_\pm$ the orthogonal projection of $\CC^4$ on the eigenspace associated to $\lambda_\pm$, and they are
\begin{eqnarray*}
&&\Pi_+=x_1x_1^*+x_2x_2^*=\frac{1}{2}(I_4+\frac{Q}{\lambda_+}), \\
&&\Pi_-=y_1y_1^*+y_2y_2^*=\frac{1}{2}(I_4-\frac{Q}{\lambda_+}).
\end{eqnarray*}
Define $\alpha_+^\eps=\Tr(\Pi_+W_\eps)=a_{11}^\eps+a_{22}^\eps$, $\alpha_-^\eps=\Tr(\Pi_-W_\eps)=b_{11}^\eps+b_{22}^\eps$. We check that
\begin{equation*}
\int_{\R^d} (\alpha_+^\eps+\alpha_-^\eps)(t,x,\xi)d\xi
=n^\eps(t,x).
\end{equation*}
Therefore we can interpret $\alpha_+^\eps$ and $\alpha_-^\eps$ as energy densities in the phase space corresponding to electrons and positrons, respectively. They are the physical quantities we are interested in.

Define $L^2_M(\R^{2d})=\{f|\|f\|_{L^2(\R^{2d})}\leq M\}$. The space we use for $(\alpha_+^\eps(t), \alpha_-^\eps(t))$ is
\begin{equation*}
H_M=L^2_M(\R^{2d})\bigoplus L^2_M(\R^{2d})=\{X=(X_1,X_2)|X_1,X_2\in L^2_M(\R^{2d})\}
\end{equation*}
endowed with the metric
\begin{equation*}
d_H(X,Y)=\sum_{n=1}^\infty \left(\frac{|\langle X_1-Y_1,e_n\rangle|}{2^n}
+\frac{|\langle X_2-Y_2,e_n\rangle|}{2^n}\right),
\end{equation*}
where $\{e_n\}$ is orthonormal basis of $L^2_M(\R^{2d})$. It is straightforward to check that $d_H$ induces the weak topology on $L^2_M(\R^{2d})\bigoplus L^2_M(\R^{2d})$ and $(H_M,d_H)$ is a complete separable metric space. We have the following tightness criteria about process taking value in $H_M$.
\begin{proposition}
Suppose $\Pb_\eps$ is the family of probability measures induced by $(\alpha_+^\eps,\alpha_-^\eps)$ on $\C([0,T];H_M)$, if for any $f\in \C^1([0,T];\mathcal{S}(\R^{2d}))$, the process $\langle f, \alpha_\pm^\eps\rangle\in \C([0,T];\R)$ is tight, then $\Pb_\eps$ is tight.
\label{prop:tightcondition}
\end{proposition}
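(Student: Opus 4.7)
The plan is a standard soft-analysis argument. A bounded ball $L^2_M(\R^{2d})$ equipped with the weak topology is compact and metrizable (since $L^2(\R^{2d})$ is separable), so $H_M$ is a compact metric space. Since it was noted earlier that $a_{ij}^\eps,b_{ij}^\eps\in\C([0,T];L^2(\R^{2d}))$ with $\|W_\eps(t)\|_{L^2}$ uniformly bounded, the sample paths of $(\alpha_+^\eps,\alpha_-^\eps)$ indeed lie in $\C([0,T];H_M)$, and pointwise tightness of the marginals is automatic from compactness. By the Arzela--Ascoli based criterion for tightness of probability measures on $\C([0,T];S)$ with $S$ a compact metric space, it then suffices to establish, for every $\eta>0$, the equicontinuity estimate
\begin{equation*}
\lim_{\delta\to0}\limsup_{\eps\to 0}\Pb\left(\sup_{|t-s|\le\delta}d_H\bigl((\alpha_+^\eps(t),\alpha_-^\eps(t)),(\alpha_+^\eps(s),\alpha_-^\eps(s))\bigr)>\eta\right)=0.
\end{equation*}

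Next I would choose the orthonormal basis $\{e_n\}$ entering the definition of $d_H$ inside $\mathcal{S}(\R^{2d})$. This is legitimate because the topology generated by $d_H$ is the weak topology, which is independent of the metrizing basis, and Schwartz functions are dense in $L^2(\R^{2d})$ so such a basis exists (for instance, rescaled Hermite functions). For each such $e_n$, the constant-in-time test function $f(t,x,\xi):=e_n(x,\xi)$ belongs to $\C^1([0,T];\mathcal{S}(\R^{2d}))$, and by hypothesis each real-valued process $\langle e_n,\alpha_\pm^\eps(\cdot)\rangle$ is tight in $\C([0,T];\R)$; consequently, for any prescribed threshold $\eta_n>0$,
\begin{equation*}
\lim_{\delta\to0}\limsup_{\eps\to 0}\Pb\left(\sup_{|t-s|\le\delta}\bigl|\langle e_n,\alpha_\pm^\eps(t)-\alpha_\pm^\eps(s)\rangle\bigr|>\eta_n\right)=0.
\end{equation*}

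Combining the two ingredients is a head--tail split. Because $\|\alpha_\pm^\eps(t)\|_{L^2}\le M$ uniformly in $t$ and $\eps$, each inner product satisfies the crude bound $|\langle e_n,\alpha_\pm^\eps(t)-\alpha_\pm^\eps(s)\rangle|\le 2M$, so each term in the series defining $d_H$ is controlled pointwise by $2^{-n}\cdot 4M$. Given $\eta>0$, I first pick $N$ large enough that $4M\cdot 2^{-N}<\eta/2$ to dispose of the tail $\sum_{n>N}$ deterministically; for the remaining finite head $\sum_{n=1}^{N}$, I apply the scalar tightness estimate above with $\eta_n:=\eta/(4N)$ and a union bound over the $2N$ events corresponding to $e_1,\ldots,e_N$ and the $\pm$ components, driving the resulting probability to zero as $\delta\to 0$ then $\eps\to 0$.

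The whole argument is soft, so the genuine obstacle is not in this proposition but in verifying the scalar tightness hypothesis itself, which is carried out in the subsequent sections via the martingale formulation and perturbed test function construction. The only internal subtlety is the freedom to assume $\{e_n\}\subset\mathcal{S}(\R^{2d})$, which is harmless once one observes that the weak topology on a bounded ball depends only on duality with a dense subset of $L^2$.
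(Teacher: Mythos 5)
Your argument is correct and is precisely the standard soft argument this proposition rests on; the paper in fact states Proposition \ref{prop:tightcondition} without proof, so there is no authorial proof to compare against. The two points worth making explicit --- that the basis $\{e_n\}$ in the definition of $d_H$ may be taken inside $\mathcal{S}(\R^{2d})$ because the weak topology on the ball (and hence, by compactness of $H_M$, the uniform topology on $\C([0,T];H_M)$) does not depend on that choice, and that compactness of $H_M$ makes the compact-containment half of the Arzel\`a--Ascoli tightness criterion automatic, leaving only the modulus-of-continuity estimate obtained from the head--tail split --- are exactly the ones you address.
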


Now we can state our main theorem.
\begin{theorem}
Suppose $W_\eps$ solves \eqref{eq:WignerEq} with an initial condition $W_\eps(0,x,\xi)$ that is uniformly bounded in $L^2(\R^{2d})$ and converges weakly in $L^2(\R^{2d})$ to a fixed, deterministic function $W_0(0,x,\xi)\in L^2(\R^{2d})$ as $\eps \to 0$. We decompose $W_\eps$ as
\begin{equation*}
W_\eps=\sum_{i,j=1,2} a_{ij}^\eps x_ix_j^*+\sum_{i,j=1,2} b_{ij}^\eps y_iy_j^*+\sum_{i,j=1,2} c_{ij}^\eps x_iy_j^*+\sum_{i,j=1,2} d_{ij}^\eps y_ix_j^*.
\end{equation*}
Then for the cross modes $c_{ij}^\eps, d_{ij}^\eps$ we have
\begin{equation}
\sum_{i,j=1,2}\left(|\int_0^T\int_{\R^{2d}}c_{ij}^\eps(t,x,\xi)f(t,x,\xi)dxd\xi dt|
+|\int_0^T\int_{\R^{2d}}d_{ij}^\eps(t,x,\xi)f(t,x,\xi)dxd\xi dt|\right)
\leq C_{f,T}\sqrt{\eps}
\end{equation}
 for any function $f\in \C^1([0,T];\mathcal{S}(\R^{2d}))$ almost surely. For the propagating modes, define $\alpha_+^\eps=a_{11}^\eps+a_{22}^\eps, \,\alpha_-^\eps=b_{11}^\eps+b_{22}^\eps$ and suppose $\Pb_\eps$ is the family of probability measures induced by $(\alpha_+^\eps,\alpha_-^\eps)$ on $\C([0,T];H_M)$. Then, as $\eps \to 0$, $\Pb_\eps$ converges weakly to the probability measure $\Pb=\delta_{(\alpha_+,\alpha_-)}$, where $(\alpha_+,\alpha_-)$ is the unique deterministic solution to the following transport equation system
\begin{equation}
\begin{aligned}
&\partial_t \alpha_++\frac{c\xi\cdot \nabla_x \alpha_+}{\lambda_+(\xi)}=\mathcal{T}(\alpha_+,\alpha_-),\\
&\partial_t \alpha_-+\frac{c\xi\cdot \nabla_x \alpha_-}{\lambda_-(\xi)}=\mathcal{T}(\alpha_-,\alpha_+).
\label{eq:tranEqsys}
\end{aligned}
\end{equation}
The initial conditions are given by $\alpha_\pm(0,x,\xi)=\Tr(\Pi_\pm W_0(0,x,\xi))$, and the scattering operator $\mathcal{T}$ is defined as
\begin{equation*}
\begin{aligned}
\mathcal{T}(\alpha_+,\alpha_-)
=&\frac{e^2}{(2\pi)^d}\int_{\R^d}(\alpha_+(q)-\alpha_+(\xi))\sum_{k=0}^3\omega_k(\xi,q)\hat{R}_{kk}(c\lambda_+(q)-c\lambda_+(\xi),q-\xi)dq\\
+&\frac{e^2}{(2\pi)^d}\int_{\R^d}(\alpha_-(q)-\alpha_+(\xi))\sum_{k=0}^3\tilde{\omega}_k(\xi,q)\hat{R}_{kk}(c\lambda_+(q)+c\lambda_+(\xi),q-\xi)dq,
\end{aligned}
\end{equation*}
where we have $\omega_k(\xi,q)+\tilde{\omega}_k(\xi,q)=1$ and
\begin{eqnarray*}
\omega_0(\xi,q)=\frac{\lambda_+(q)\lambda_+(\xi)+\xi_1q_1+\xi_2q_2+\xi_3q_3+m_0^2c^2}{2\lambda_+(q)\lambda_+(\xi)},\\
\omega_1(\xi,q)=\frac{\lambda_+(q)\lambda_+(\xi)+\xi_1q_1-\xi_2q_2-\xi_3q_3-m_0^2c^2}{2\lambda_+(q)\lambda_+(\xi)},\\
\omega_2(\xi,q)=\frac{\lambda_+(q)\lambda_+(\xi)-\xi_1q_1+\xi_2q_2-\xi_3q_3-m_0^2c^2}{2\lambda_+(q)\lambda_+(\xi)},\\
\omega_3(\xi,q)=\frac{\lambda_+(q)\lambda_+(\xi)-\xi_1q_1-\xi_2q_2+\xi_3q_3-m_0^2c^2}{2\lambda_+(q)\lambda_+(\xi)}.
\end{eqnarray*}
\label{theo:maintheorem}
\end{theorem}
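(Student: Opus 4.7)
The plan is to implement the martingale and perturbed test function methodology of Papanicolaou--Stroock--Varadhan adapted to the matrix-valued Wigner setting. The pair $(W_\eps(t),\tilde A(t/\eps))$ is Markovian with generator of the schematic form
\[
\Li_\eps \;=\; \eps^{-1}\M \;+\; \eps^{-1/2}\Li_1 \;+\; \eps^{-1}\Q,
\]
where $\M$ is the fast dispersion commutator $[iQ(\xi),\cdot\,]$ coming from the algebraic $O(1)$ terms in \eqref{eq:WignerEq}, $\Li_1$ is the random multiplicative operator built from $\K_\eps^k$ and $\tilde\K_\eps^k$, and $\Q$ acts on the $\tilde A$-component. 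For a test functional $\phi(W)$ that lies in $\ker\M$ (equivalently, depends only on the propagating components $\alpha_\pm$), I would construct a perturbed $\phi_\eps=\phi+\sqrt{\eps}\,\phi_1+\eps\,\phi_2$ by successively solving Poisson equations $\Q\phi_1=-\Li_1\phi$ and an analogous equation for $\phi_2$ obtained by centering the $O(1)$ nuisance term; the Fredholm solvability at each step is guaranteed by the spectral gap \eqref{eq:speGap} together with the mean-zero property of $\tilde A$. The process $M_t^\eps=\phi_\eps(W_\eps(t),\tilde A(t/\eps))-\int_0^t\Li_\eps\phi_\eps\,ds$ is then a martingale whose drift converges to the generator of \eqref{eq:tranEqsys}.

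For the cross modes, which live off-diagonal with respect to the spectral decomposition $\CC^4=\mathrm{Ran}\,\Pi_+\oplus\mathrm{Ran}\,\Pi_-$, I would exploit the spectral gap of $Q(\xi)$: on a cross-block $\M$ restricts to multiplication by $\pm 2i\lambda_+(\xi)$, which is invertible with uniformly bounded inverse since $\lambda_+(\xi)\geq m_0c>0$. Writing $c_{ij}^\eps=x_i(\xi)^*W_\eps y_j(\xi)$ (and analogously for $d_{ij}^\eps$), testing \eqref{eq:WignerEq} against $f\in\C^1([0,T];\mathcal{S}(\R^{2d}))$, and integrating by parts in time via $\M^{-1}$ (equivalently, using the rapidly oscillating phase $e^{\pm 2i\lambda_+(\xi)t/\eps}$), the $\eps^{-1}$ contribution reduces to boundary terms of size $O(\eps)$, while the random interaction $\sqrt{\eps}\,\Li_1$ contributes $O(\sqrt{\eps})$. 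Uniformity in $\tilde A\in\V$, and hence the almost-sure character of the bound, follows from the total variation bound and compact Fourier support built into the definition of $\V$.

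For the propagating modes, tightness of $\Pb_\eps$ on $\C([0,T];H_M)$ reduces by Proposition~\ref{prop:tightcondition} to tightness of $\langle f,\alpha_\pm^\eps\rangle$ in $\C([0,T];\R)$, which I would obtain from Aldous's criterion applied to the martingale decomposition of $\phi_\eps$; equicontinuity is supplied by the uniform boundedness of $\Li_\eps\phi_\eps$. To identify any subsequential limit, I would compute $\pi$-averages of $\Li_1\phi_1$ explicitly: the $\Q$-resolvent acting on the oscillatory phases $e^{i(\lambda_\sigma(\xi)\pm\lambda_{\sigma'}(q))t/\eps}$ produces, after taking the $\pi$-average (which replaces the two-time correlation by $\hat R_{kk}$), localization at the resonance frequencies $c(\lambda_+(q)\mp\lambda_+(\xi))$. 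Summing the bilinear forms involving $\gamma^0\gamma^k$ over the polarization indices yields the coefficients $\omega_k$ (same-band scattering) and $\tilde\omega_k$ (opposite-band scattering), while the transport drift comes from the $O(\eps)$ Taylor expansion of $P_k(i\xi+\eps D/2)$; together they produce \eqref{eq:tranEqsys}. Well-posedness of \eqref{eq:tranEqsys} in $L^2$ by a Duhamel contraction argument (since $\mathcal T$ is bounded on $L^2$) gives uniqueness of the limit, so $\Pb_\eps\to\delta_{(\alpha_+,\alpha_-)}$ without passing to subsequences.

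The principal obstacle, genuinely new compared with the scalar Schr\"odinger case, is the pathwise $O(\sqrt{\eps})$ estimate for the cross modes. Unlike the scalar case, the polarization space is non-trivial and the random field couples propagating and cross components at every order in $\sqrt{\eps}$, so the perturbed test function hierarchy for the propagating modes is only consistent once the cross modes are shown to be negligible almost surely. Orchestrating the quantitative spectral gap in $\M$, the Fredholm solvability via the spectral gap in $\Q$, and the non-resonance of the energy bands $\lambda_\pm$ on unbounded phase space, while carrying out the time integration by parts without losing uniformity in $\tilde A\in\V$, is where the bulk of the technical work will reside.
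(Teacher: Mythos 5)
Your overall strategy --- the direct $L^2$/commutator argument for the cross modes, a perturbed-test-function hierarchy and approximating martingales for the propagating modes, and tightness via Proposition \ref{prop:tightcondition} --- coincides with the paper's, and the cross-mode part is essentially the paper's proof (test \eqref{eq:WignerEq} against $F=f\,x_iy_j^*/\lambda_+$, use the uniform $L^2$ bound on $W_\eps$, and observe that everything except the commutator term is $O(\sqrt{\eps})$). However, your corrector hierarchy is inconsistent as written: you propose to solve $\Q\phi_1=-\Li_1\phi$, but $\Li_1\phi$ does not lie in $\ker\M$, so with that choice the term $\eps^{-1}\M(\sqrt{\eps}\,\phi_1)=\eps^{-1/2}\M\phi_1$ is left uncancelled and the expansion diverges. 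The Poisson equation must be posed for the combined fast operator $\Q+\M$ --- in the paper this is \eqref{eq:lambda1Eq}, $\Q\hat\lambda_1-A_1\hat\lambda_1-\hat\lambda_1A_2=F_1$, solved by the joint semigroup $\int_0^\infty e^{r\Q}e^{-rA_1}(\cdot)e^{-rA_2}\,dr$. Your later remark that the resolvent acts on the oscillatory phases and localizes $\hat R_{kk}$ at the resonance frequencies $c(\lambda_+(q)\mp\lambda_+(\xi))$ is only consistent with this corrected equation; a resolvent of $\Q$ alone produces no such frequency localization, so the two halves of your argument contradict each other.

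The more serious gap is the inference from ``the limiting drift is the generator of \eqref{eq:tranEqsys} and \eqref{eq:tranEqsys} is well-posed'' to ``$\Pb_\eps\to\delta_{(\alpha_+,\alpha_-)}$.'' The approximating martingale for linear functionals $\langle\lambda_0,W\rangle$ only identifies $\E\alpha_\pm$; a subsequential limit could perfectly well be a random process whose mean solves the transport system (Brownian motion is a martingale with zero drift, yet its law is not $\delta_0$). To obtain a Dirac mass one must run the same construction on quadratic functionals $\langle F_0\otimes G_0,\,W\otimes W\rangle$ and conclude $\E\{\alpha_\pm\otimes\alpha_\pm\}=\E\alpha_\pm\otimes\E\alpha_\pm$. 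The delicate point there --- which your proposal does not mention --- is that the formal limiting generator for $W\otimes W$ contains, in addition to $(\A F_0)\otimes G_0+F_0\otimes(\A G_0)$, a cross-covariance term $\mathbf{L}_2\mu_0$ coupling the randomness seen at $x_1$ and at $x_2$; the paper devotes a separate lemma to showing $\|\mathbf{L}_2\mu_0\|_{L^2}^2\les\eps^{d}$ via a Fourier/decorrelation argument. If that term did not vanish the limit would be a stochastic rather than deterministic transport equation, so this estimate is the crux of the determinism claim and cannot be absorbed into ``uniqueness of the PDE.'' (The remaining differences --- Aldous versus the Kolmogorov moment criterion for tightness, Duhamel contraction versus the energy dissipation identity for uniqueness of \eqref{eq:tranEqsys} --- are harmless.)
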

\begin{remark}
We have assumed that $\tilde{R}_{mn}=0$ when $m\neq n$. Similar results could be obtained for the general case with more complicated expressions we do not reproduce here.
\end{remark}
\begin{remark}
We see from the structure of the scattering operator that due to the temporal regularization,  the energy $\sqrt{m_0^2c^2+|\xi|^2}$ is no longer conserved. Scattering is inelastic and we observe a coupling between the propagating modes $\alpha_+$ and $\alpha_-$.
\end{remark}
\begin{remark}
The uniqueness of solutions to the transport equation system comes from the fact that $\partial_t ( \langle \alpha_+,\alpha_+\rangle +\langle \alpha_-,\alpha_-\rangle)\leq 0$ under the dynamics of \eqref{eq:tranEqsys}. To see this, we only have to note that
\begin{equation*}
\langle \alpha_\pm, \frac{c\xi\cdot \nabla_x\alpha_\pm}{\lambda_\pm(\xi)}\rangle =0,
\end{equation*}
and
\begin{equation*}
\begin{aligned}
&\langle \alpha_+, \mathcal{T}(\alpha_+,\alpha_-)\rangle+\langle \alpha_-,\mathcal{T}(\alpha_-,\alpha_+)\rangle\\
=&-\frac{e^2}{2(2\pi)^d}\int_{\R^{3d}}\left[ (\alpha_+(q)-\alpha_+(\xi))^2+(\alpha_-(q)-\alpha_-(\xi))^2\right]
\sum_{k=0}^3\omega_k(\xi,q)\hat{R}_{kk}(c\lambda_+(q)-c\lambda_+(\xi),q-\xi)dqd\xi dx\\
&-\frac{e^2}{(2\pi)^d}\int_{\R^{3d}}(\alpha_+(\xi)-\alpha_-(q))^2
\sum_{k=0}^3\tilde{\omega}_k(\xi,q)\hat{R}_{kk}(c\lambda_+(q)+c\lambda_+(\xi),q-\xi)dqd\xi
dx\leq 0.
\end{aligned}
\end{equation*}
\end{remark}

\subsection{Comments on the limiting equation}

Using the same approach, we can more generally show that the matrix-valued process $(\mathbf{A}^\eps(t),\mathbf{B}^\eps(t))$ has the weak limit $(\mathbf{A}(t),\mathbf{B}(t))$ satisfying a transport equation system, where we have defined $\mathbf{A}^\eps(t)=(a_{ij}^\eps(t))$,$\mathbf{B}^\eps(t)=(b_{ij}^\eps(t))$ and $\mathbf{A}(t)=(a_{ij}(t))$,$\mathbf{B}(t)=(b_{ij}(t))$. In other words, we have a limiting transport equation system for $a_{ij}^\eps, b_{ij}^\eps$.

Using the operators defined in \eqref{eq:limitA}, we formally have
\begin{equation}
\partial_t W_0=\A^* W_0,
\label{eq:limitW0}
\end{equation}
where $W_0=\sum_{i,j=1,2}a_{ij}x_ix_j^*+\sum_{i,j=1,2}b_{ij}y_iy_j^*$. To derive the equation satisfied by $a_{ij}$ or $b_{ij}$, we only have to write \eqref{eq:limitW0} as $\partial_t x_i^*W_0x_j=x_i^*\A^*W_0x_j$ or $\partial_t y_i^*W_0y_j=y_i^*\A^*W_0y_j$ and compute $x_i^*\A^*W_0x_j, y_i^*\A^*W_0y_j$ respectively. It should be mentioned that $W_0$ does not satisfy \eqref{eq:limitW0} because the dynamics would inevitably generate modes of $x_iy_j^*, y_ix_j^*$.

Therefore, we have the limiting coefficient matrices $\mathbf{A}(t)$ and $\mathbf{B}(t)$ satisfying the following transport equation system:
\begin{equation}
\partial_t \mathbf{A}(t)=\left(
\begin{array}{cc}
x_1^*\A^*W_0x_1 & x_1^*\A^*W_0x_2 \\
x_2^*\A^*W_0x_1 & x_2^*\A^*W_0x_2
\end{array}\right),\quad
\partial_t \mathbf{B}(t)=\left(
\begin{array}{cc}
y_1^*\A^*W_0y_1 & y_1^*\A^*W_0y_2 \\
y_2^*\A^*W_0y_1 & y_2^*\A^*W_0y_2
\end{array}\right).
\label{eq:bigtranEqsys}
\end{equation}
So $\alpha_+=a_{11}+a_{22}$ and $\alpha_-=b_{11}+b_{22}$ satisfy
\begin{eqnarray*}
&&\partial_t\alpha_+=x_1^*\A^*W_0x_1+x_2^*\A^*W_0x_2,\\
&&\partial_t\alpha_-=y_1^*\A^*W_0y_1+y_2^*\A^*W_0y_2.
\end{eqnarray*}
If we calculate these expressions explicitly, we recover \eqref{eq:tranEqsys} in Theorem \ref{theo:maintheorem}.

From \eqref{eq:tranEqsys} we see that the equation of $\alpha_+=a_{11}+a_{22}$ does not involve $a_{ij},b_{ij}$ when $i\neq j$. This could be illustrated as follows.

First of all, we write the equations satisfied by $a_{11}$ and $a_{22}$ explicitly.
\begin{equation}
\begin{aligned}
\left(\frac{e^2}{(2\pi)^d}\right)^{-1}\left(\partial_t a_{11}+\frac{c\xi\cdot \nabla_x a_{11}}{\lambda_+(\xi)}\right)
=&\int_{\R^d} \sum_{k=0}^3 \sum_{i,j=1,2}a_{ij}(q)x_1^*(\xi)\gamma^0\gamma^kx_i(q)x_j^*(q)\gamma^0\gamma^kx_1(\xi)\hat{R}_{kk}^-dq\\
+&\int_{\R^d} \sum_{k=0}^3 \sum_{i,j=1,2}b_{ij}(q)x_1^*(\xi)\gamma^0\gamma^ky_i(q)y_j^*(q)\gamma^0\gamma^kx_1(\xi)\hat{R}_{kk}^+dq\\
-&\int_{\R^d}\sum_{k=0}^3
a_{11}(\xi)\left[ |x_1^*(\xi)\gamma^0\gamma^kx_1(q)|^2+|x_1^*(\xi)\gamma^0\gamma^kx_2(q)|^2\right]\hat{R}_{kk}^-dq\\
-&\int_{\R^d}\sum_{k=0}^3
a_{11}(\xi)\left[ |x_1^*(\xi)\gamma^0\gamma^ky_1(q)|^2+|x_1^*(\xi)\gamma^0\gamma^ky_2(q)|^2\right]\hat{R}_{kk}^+dq,
\label{eq:eqa11}
\end{aligned}
\end{equation}

\begin{equation}
\begin{aligned}
\left(\frac{e^2}{(2\pi)^d}\right)^{-1}\left(\partial_t a_{22}+\frac{c\xi\cdot \nabla_x a_{22}}{\lambda_+(\xi)}\right)=&\int_{\R^d} \sum_{k=0}^3 \sum_{i,j=1,2}a_{ij}(q)x_2^*(\xi)\gamma^0\gamma^kx_i(q)x_j^*(q)\gamma^0\gamma^kx_2(\xi)\hat{R}_{kk}^-dq\\
+&\int_{\R^d} \sum_{k=0}^3 \sum_{i,j=1,2}b_{ij}(q)x_2^*(\xi)\gamma^0\gamma^ky_i(q)y_j^*(q)\gamma^0\gamma^kx_2(\xi)\hat{R}_{kk}^+dq\\
-&\int_{\R^d}\sum_{k=0}^3
a_{22}(\xi)\left[ |x_2^*(\xi)\gamma^0\gamma^kx_1(q)|^2+|x_2^*(\xi)\gamma^0\gamma^kx_2(q)|^2\right]\hat{R}_{kk}^-dq\\
-&\int_{\R^d}\sum_{k=0}^3
a_{22}(\xi)\left[ |x_2^*(\xi)\gamma^0\gamma^ky_1(q)|^2+|x_2^*(\xi)\gamma^0\gamma^ky_2(q)|^2\right]\hat{R}_{kk}^+dq,
\label{eq:eqa22}
\end{aligned}
\end{equation}
where $\hat{R}_{kk}^-:=\hat{R}_{kk}(c\lambda_+(q)-c\lambda_+(\xi),q-\xi), \hat{R}_{kk}^+:=\hat{R}_{kk}(c\lambda_+(q)+c\lambda_+(\xi),q-\xi)$. So the equations for $a_{11}$ and $a_{22}$ are indeed related to $a_{12},a_{21},b_{12},b_{21}$. However, if we sum \eqref{eq:eqa11} and \eqref{eq:eqa22}, with the observation that
\begin{equation*}
\gamma^0\gamma^kQ(\xi)\gamma^0\gamma^kQ(q)+
Q(q)\gamma^0\gamma^kQ(\xi)\gamma^0\gamma^k=c_kI_4
\end{equation*}
for some constant $c_k$, we have all terms concerning $a_{ij},b_{ij}$ with $i\neq j$ cancel out.

Take $a_{12}$ for example, we note that the coefficient of $a_{12}$ is
\begin{equation*}
\begin{aligned}
(I)=&x_1^*(\xi)\gamma^0\gamma^kx_1(q)x_2^*(q)\gamma^0\gamma^kx_1(\xi)+x_2^*(\xi)\gamma^0\gamma^kx_1(q)x_2^*(q)\gamma^0\gamma^kx_2(\xi)\\
=&x_2^*(q)\gamma^0\gamma^k(x_1(\xi)x_1^*(\xi)+x_2(\xi)x_2^*(\xi))\gamma^0\gamma^kx_1(q)\\
=&x_2^*(q)\gamma^0\gamma^k\frac{1}{2}(I_4+\frac{Q(\xi)}{\lambda_+(\xi)})\gamma^0\gamma^kx_1(q)=x_2^*(q)\frac{1}{2}(I_4+\gamma^0\gamma^k\frac{Q(\xi)}{\lambda_+(\xi)}\gamma^0\gamma^k)x_1(q)\\
=&x_2^*(q)Q(q)\gamma^0\gamma^kQ(\xi)\gamma^0\gamma^kx_1(q)/(2\lambda_+(q)\lambda_+(\xi))\\
=&x_2^*(q)\gamma^0\gamma^kQ(\xi)\gamma^0\gamma^kQ(q)x_1(q)/(2\lambda_+(q)\lambda_+(\xi)),
\end{aligned}
\end{equation*}
so
\begin{equation*}
2(I)=x_2^*(q)\left[\gamma^0\gamma^kQ(\xi)\gamma^0\gamma^kQ(q)+
Q(q)\gamma^0\gamma^kQ(\xi)\gamma^0\gamma^k\right]x_1(q)=0.
\end{equation*}
Similar expressions hold for $a_{21},b_{12},b_{21}$.

\section{Outline of the proof}
\label{sec:outlineproof}
The proof of cross modes converging to zero comes from the equation itself and the a priori $L^2$ boundedness of $W_\eps$. For the propagating modes $\alpha_\pm^\eps$, the basic idea is to use the Markovian property of $\tilde{A}_k(t,p)dp,\, k=0,1,2,3$. We construct appropriate test functions, then prove the approximating martingale property which will be specified later. Using the approximating martingale inequalities,  we can prove the uniqueness of the limit; together with the tightness of $(\alpha_+^\eps,\alpha_-^\eps)$, we finish the proof of the main theorem.

First, we define the operator $\Li$ and $\mathcal{A}$
\begin{equation*}
\begin{aligned}
\Li\lambda_0(x,\xi)=
&\frac{e^2}{(2\pi)^d}\sum_{m,n=0}^3\int_0^\infty \int_{\R^d}\gamma^0\gamma^m\tilde{R}_{mn}(r,q)e^{-rA_1(\xi-\frac{q}{2},-q)}\left[
\lambda_0(\xi-q)\gamma^0\gamma^n-\gamma^0\gamma^n\lambda_0(\xi)\right]e^{-rA_2(\xi-\frac{q}{2},-q)}dqdr\\
-&\frac{e^2}{(2\pi)^d}
\sum_{m,n=0}^3\int_0^\infty\int_{\R^d}e^{-rA_1(\xi-\frac{q}{2},q)}\left[\lambda_0(\xi)\gamma^0\gamma^m-\gamma^0\gamma^m\lambda_0(\xi-q)\right]
e^{-rA_2(\xi-\frac{q}{2},q)}\gamma^0\gamma^n\tilde{R}_{mn}(r,q)dqdr,
\end{aligned}
\end{equation*}
\begin{equation}
\mathcal{A}\lambda_0=\frac{1}{2}\sum_{k=1}^3(c\gamma^0\gamma^kD_k\lambda_0+\frac{1}{2}D_k\lambda_0 c\gamma^0\gamma^k)+\Li\lambda_0,
\label{eq:limitA}
\end{equation}
where
\begin{equation}
\begin{array}{l}
\displaystyle A_1(\xi,p)=\sum_{k=1}^3c\gamma^0\gamma^kP_k^*(i\xi+\frac{ip}{2})-im_0c^2\gamma^0,\\
\displaystyle A_2(\xi,p)=\sum_{k=1}^3 P_k(i\xi-\frac{ip}{2})c\gamma^0\gamma^k+im_0c^2\gamma^0.
\end{array}
\label{eq:A1A2}
\end{equation}

We will see later that for any test function $\lambda_0$ such that $Q\lambda_0=\lambda_0Q$, we have $\langle \lambda_0,W_0\rangle$ formally satisfies the equation
\begin{equation*}
\partial_t\langle \lambda_0,W_0\rangle=\langle (\partial_t+\mathcal{A})\lambda_0,W_0\rangle,
\end{equation*}
and this motivates us to define the first approximating martingale functional \\$\G^1_{\lambda_0}:\C([0,T];L^2(\R^{2d}))\to \C([0,T])$ as
\begin{equation}
\G^1_{\lambda_0}[W](t)=\langle \lambda_0,W\rangle(t)-\int_0^t ds\langle (\partial_t+\mathcal{A})\lambda_0,W\rangle(s).
\end{equation}
Assume $\tilde{\Pb}_\eps$ is the probability measure induced by $W_\eps$ on $\C([0,T];L^2(\R^{2d}))$. Our first goal is to show that under $\tilde{\Pb}_\eps$, $\G^1_{\lambda_0}[W](t)$ is an approximating martingale. More precisely, we will show
\begin{equation}
|\E^{\tilde{\Pb}_\eps}\{\G^1_{\lambda_0}[W](t)|\F_s\}-\G^1_{\lambda_0}[W](s)|\leq C_{\lambda_0,T}\sqrt{\eps},
\label{eq:1stapproMartin}
\end{equation}
uniformly for all $W\in \C([0,T];L^2(\R^{2d}))$ and $0\leq s<t\leq T$.

 Supposing $(\alpha_+^\eps,\alpha_-^\eps)\Rightarrow (\alpha_+,\alpha_-)$ and choosing appropriate test functions $\lambda_0$ in \eqref{eq:1stapproMartin}, we obtain from the above bound that  $(\E\{\alpha_+\},\E\{\alpha_-\})$ satisfies the transport equation system \eqref{eq:tranEqsys} by setting $s=0$ and $\eps \to 0$.

On the other hand, for any test function $F_0$ and $G_0$ satisfying the dispersion relation, i.e., $QF_0=F_0Q, QG_0=G_0Q$, let $\mu_0=F_0\otimes G_0$  and define the second approximating martingale functional $\G^2_{\mu_0}:\C([0,T];L^2(\R^{2d}))\to \C([0,T])$ as
\begin{equation}
\G^2_{\mu_0}[W](t)=\langle  F_0\otimes G_0, W\otimes W\rangle(t)-\int_0^tds\langle
F_0\otimes \left[(\partial_t+\mathcal{A})G_0\right]+\left[(\partial_t+\mathcal{A})F_0\right]\otimes G_0,W\otimes W\rangle(s).
\end{equation}
Our second goal is to show that under $\tilde{\Pb}_\eps$, $\G^2_{\mu_0}[W](t)$ is an approximating martingale, i.e.,
\begin{equation}
|\E^{\tilde{\Pb}_\eps}\{\G^2_{\mu_0}[W](t)|\F_s\}-\G^2_{\mu_0}[W](s)|\leq C_{\mu_0,T}\sqrt{\eps},
\label{eq:2ndapproMartin}
\end{equation}
uniformly for all $W\in \C([0,T];L^2(\R^{2d}))$ and $0\leq s<t\leq T$.

Similarly, by choosing $s=0$ and letting $\eps\to 0$, we can show that $\E\{\alpha_+\otimes \alpha_+\}=\E\{\alpha_+\}\otimes\E\{\alpha_+\}$ and $\E\{\alpha_-\otimes \alpha_-\}=\E\{\alpha_-\}\otimes\E\{\alpha_-\}$, so $(\alpha_+,\alpha_-)$ is deterministic hence the solution to the transport equation system \eqref{eq:tranEqsys}.

We next go through the construction of test functions.

\subsection{Construction of $\lambda_{1,\eps},\lambda_{2,\eps}$}
In order to obtain the approximating martingale inequalities, we have to consider the conditional expectation of functions $F(\hat{V},W)$. The only functions we are interested in are those of the form $F(\hat{V},W)=\langle \lambda(\hat{V}),W\rangle$. Given a function $F(\hat{V},W)$, if we denote by $\bar{\Pb}_\eps$ the probability measure induced by $(W_\eps(t),\tilde{A}(t/\eps))$ on the space $\C([0,T];L^2(\R^{2d}))\times\V$, the conditional expectation is defined as
\begin{equation*}
\E^{\bar{\Pb}_\eps}_{W,\hat{V},t}\{F(\hat{V},W)\}(\tau)
=\E^{\bar{\Pb}_\eps}\{F(\tilde{A}(\tau),W(\tau))|\tilde{A}(t)=\hat{V},W(t)
=W\}, \tau\geq t.
\end{equation*}

The weak form of the infinitesimal generator of the Markov process induced by $(W_\eps(t),\tilde{A}(t/\eps))$  is
\begin{equation}
\frac{d}{dh}\E^{\bar{\Pb}_\eps}_{W,\hat{V},t}\{
\langle \lambda(\hat{V}),W\rangle\}(t+h)|_{h=0}
=\langle \frac{1}{\eps}\Q\lambda+\partial_t\lambda+\A_\eps\lambda,W\rangle,
 \label{eq:infiniGene}
 \end{equation}
 provided we write the equation \eqref{eq:WignerEq} as $\partial_t W_\eps=\A_\eps^* W_\eps$ with $\A_\eps^*$ the adjoint of $\A_\eps$.

We have
\begin{equation}
\begin{aligned}
\A_\eps\lambda=&-\frac{1}{\eps}\sum_{k=1}^3c\gamma^0\gamma^kP_k^*(i\xi+\frac{\eps D}{2})\lambda-\frac{1}{\eps}\sum_{k=1}^3\lambda P_k(i\xi-\frac{\eps D}{2})c\gamma^0\gamma^k+\frac{1}{\eps}im_0c^2\gamma^0\lambda-\frac{1}{\eps}im_0c^2\lambda\gamma^0\\
&-\frac{1}{\sqrt{\eps}}ie\sum_{k=0}^3\gamma^0\gamma^k\K_{\eps}^k\lambda+\frac{1}{\sqrt{\eps}}ie\sum_{k=0}^3\tilde{\K}_{\eps}^k\lambda\gamma^0\gamma^k.
\end{aligned}
\label{eq:adjointAeps}
\end{equation}

Let $\lambda_\eps=\lambda_0+\sqrt{\eps}\lambda_{1,\eps}+\eps\lambda_{2,\eps}$ with $\lambda_0$ satisfying $Q(\xi)\lambda_0(\xi)=\lambda_0(\xi)Q(\xi)$. Plugging this expressions into $(\frac{1}{\eps}Q+\partial_t+\A_\eps)\lambda_\eps$ and equating like powers of $\eps$, we find that the term of order $1/\eps$ equal to zero.

Considering the term of order $1/\sqrt{\eps}$, we introduce the fast variable $z=x/\eps$ and define $\lambda_{1,\eps}=\lambda_1(\tilde{A},t,x,x/\eps,\xi)$, where $\lambda_1=\lambda_1(\tilde{A},t,x,z,\xi)$ solves
\begin{equation}
\Q\hat{\lambda}_1-A_1\hat{\lambda}_1-\hat{\lambda}_1A_2=F_1,
\label{eq:lambda1Eq}
\end{equation}
with $\hat{\lambda}_1=\F_{z\to p}\lambda_1$, $A_1,A_2$ defined in \eqref{eq:A1A2} and
\begin{eqnarray*}
&&F_1=ie G_t(p)\lambda_0(\xi-\frac{p}{2})
-ie\lambda_0(\xi+\frac{p}{2})G_t(p),\\
&&G_t(p)=\sum_{k=0}^3\gamma^0\gamma^k\tilde{A}_k(\frac{t}{\eps},p).
\end{eqnarray*}
 The solution to \eqref{eq:lambda1Eq} is given by
\begin{equation}
\lambda_1=-\int_{\R^d}\frac{e^{iz\cdot p}}{(2\pi)^d}\int_0^\infty e^{r\Q}e^{-rA_1}F_1e^{-rA_2}drdp.
\label{eq:lambda1}
\end{equation}

Similarly, we can define $\lambda_{2,\eps}=\lambda_2(\tilde{A},t,x,x/\eps,\xi)$ with $\lambda_2=\lambda_2(\tilde{A},t,x,z,\xi)$ solving
\begin{equation}
\Q\hat{\lambda}_2-A_1\hat{\lambda}_2-\hat{\lambda}_2A_2=F_2+\Li\lambda_0(2\pi)^d\delta(p),
\label{eq:lambda2Eq}
\end{equation}
where \begin{equation*}
F_2=\frac{ie}{(2\pi)^d}\int_{\R^d}G_t(q)\hat{\lambda}_1(p-q,\xi-\frac{q}{2})dq
-\frac{ie}{(2\pi)^d}\int_{\R^d}\hat{\lambda}_1(p-q,\xi+\frac{q}{2})G_t(q)dq.
\end{equation*}
The solvability comes from the fact that $\E\{F_2\}+\Li\lambda_0(2\pi)^d\delta(p)=0$, and the solution is given by
\begin{equation}
\lambda_2=-\int_{\R^d}\frac{e^{iz\cdot p}}{(2\pi)^d}\int_0^\infty e^{r\Q}e^{-rA_1}(F_2+\Li\lambda_0(2\pi)^d\delta(p))e^{-rA_2}drdp.
\label{eq:lambda2}
\end{equation}

By \eqref{eq:infiniGene}, we know that
\begin{equation*}
\G_{\lambda_0}^{1,\eps}[W](t):=\langle \lambda_\eps,W\rangle(t)-\int_0^t ds\langle (\frac{1}{\eps}\Q+\partial_t+\A_\eps)\lambda_\eps,W\rangle(s)
\end{equation*}
is a $\bar{\Pb}_\eps$-martingale. With $\lambda_\eps=\lambda_0+\sqrt{\eps}\lambda_{1,\eps}+\eps\lambda_{2,\eps}$, we have
\begin{equation*}
\G_{\lambda_0}^{1,\eps}[W](t)=\G_{\lambda_0}^1[W](t)+\langle C_1,W\rangle(t)+\int_0^tds
\langle C_2+C_3+C_4,W\rangle(s),
\end{equation*}
where the correctors $C_i$ are
\begin{eqnarray*}
&&C_1=\sqrt{\eps}\lambda_{1,\eps}+\eps\lambda_{2,\eps},\\
&&C_2=-\partial_t(\sqrt{\eps}\lambda_{1,\eps}+\eps\lambda_{2,\eps}),\\
&&C_3=\sqrt{\eps}ie\sum_{k=0}^3\gamma^0\gamma^k\K_{\eps}^k\lambda_{2,\eps}
-\sqrt{\eps}ie\sum_{k=0}^3\tilde{\K}_{\eps}^k\lambda_{2,\eps}\gamma^0\gamma^k,
\end{eqnarray*}
\begin{equation*}
C_4=\frac{1}{\eps}\sum_{k=1}^3c\gamma^0\gamma^kP_k^*(\frac{\eps D_x}{2})(\sqrt{\eps}\lambda_{1,\eps}+\eps\lambda_{2,\eps})+\frac{1}{\eps}\sum_{k=1}^3(\sqrt{\eps}\lambda_{1,\eps}+\eps\lambda_{2,\eps})P_k(-\frac{\eps D_x}{2})c\gamma^0\gamma^k.
\end{equation*}
\begin{remark}
In $C_4$, the derivative $D_x$ is with respect to the slow variable.
\end{remark}
To prove that $\G_{\lambda_0}^1[W](t)$ is a $\tilde{\Pb}_\eps$-approximating martingale, we need to show that the following correctors are small:
\begin{equation*}
\|\langle \sum_{i=1}^4C_i,\sum_{i=1}^4 C_i\rangle(t)\|_{L^\infty(\V)}\leq C_{\lambda_0,T}\eps
\end{equation*}
uniformly in $t\in [0,T]$.

\subsection{Construction of $\mu_{1,\eps},\mu_{2,\eps}$}
First of all, we derive the equation satisfied by $W_\eps\otimes W_\eps(t,x_1,x_2,\xi_1,\xi_2)$. Define the $16\times 16$ matrices
\begin{equation*}
\Gamma^k=\left(\begin{array}{cccc}
\gamma^k_{11}I_4 & \gamma^k_{12}I_4 & \gamma^k_{13}I_4 & \gamma^k_{14}I_4\\
\gamma^k_{21}I_4 & \gamma^k_{22}I_4 & \gamma^k_{23}I_4 & \gamma^k_{24}I_4\\
\gamma^k_{31}I_4 & \gamma^k_{32}I_4 & \gamma^k_{33}I_4 & \gamma^k_{34}I_4\\
\gamma^k_{41}I_4 & \gamma^k_{42}I_4 & \gamma^k_{43}I_4 & \gamma^k_{44}I_4\end{array}\right),\qquad
\tilde{\Gamma}^k=
\left(\begin{array}{cccc}
\gamma^k & 0 & 0 & 0 \\
0 & \gamma^k & 0 & 0\\
0 & 0 & \gamma^k & 0\\
0 & 0 & 0 & \gamma^k\end{array}\right).
\end{equation*}
We have
\begin{equation}
\begin{aligned}
&\eps\partial_t W_\eps\otimes W_\eps+\sum_{k=1}^3 \left[c\Gamma^0\Gamma^kP_{k1}(i\xi+\frac{\eps D}{2})+c\tilde{\Gamma}^0\tilde{\Gamma}^kP_{k2}(i\xi+\frac{\eps D}{2})\right]W_\eps\otimes W_\eps+im_0c^2(\Gamma^0+\tilde{\Gamma}^0)W_\eps\otimes W_\eps\\
+&W_\eps\otimes W_\eps\sum_{k=1}^3 \left[c\Gamma^0\Gamma^kP_{k1}^*(i\xi-\frac{\eps D}{2})+c\tilde{\Gamma}_0\tilde{\Gamma}^kP_{k2}^*(i\xi-\frac{\eps D}{2})\right]-im_0c^2W_\eps\otimes W_\eps(\Gamma^0+\tilde{\Gamma}^0)\\
-&ie\sqrt{\eps}\sum_{k=0}^3(\Gamma^0\Gamma^k\K_{\eps 1}^k+\tilde{\Gamma}^0\tilde{\Gamma}^k\K_{\eps 2}^k)W_\eps\otimes W_\eps+ie\sqrt{\eps} W_\eps\otimes W_\eps \sum_{k=0}^3 (\Gamma^0\Gamma^k\tilde{\K}_{\eps 1}^k+\tilde{\Gamma}^0\tilde{\Gamma}^k\tilde{\K}_{\eps 2}^k)=0,
\label{eq:2ndWignerEq}
\end{aligned}
\end{equation}
where $P_{ki},P_{ki}^*, \K_{\eps i}^k, \tilde{\K}_{\eps i}^k$ are the corresponding operators with respect to $(x_i,\xi_i), i=1,2$.

The weak form of the infinitesimal generator of the Markov process $\langle \mu, W_\eps\otimes W_\eps\rangle$ is
 \begin{equation*}
 \frac{d}{dh}\E^{\bar{\Pb}_\eps}_{W,\hat{V},t}\{\langle \mu(\hat{V}),W\otimes W\rangle \}(t+h)|_{h=0}=\langle (\frac{1}{\eps}\Q+\partial_t+\mathcal{B}_\eps)\mu, W\otimes W\rangle
 \end{equation*}
 if we write \eqref{eq:2ndWignerEq} as $\partial_t (W_\eps\otimes W_\eps)=\mathcal{B}_\eps^* (W_\eps\otimes W_\eps)$. Define $\mu_\eps=\mu_0+\sqrt{\eps}\mu_{1,\eps}+\eps\mu_{2,\eps}$. Then
 \begin{equation*}
 \G_{\mu_0}^{2,\eps}[W](t):=\langle \mu_\eps, W\otimes W\rangle(t)-\int_0^t ds \langle (\frac{1}{\eps}\Q+\partial_t+\mathcal{B}_\eps)\mu_\eps, W\otimes W\rangle(s)
 \end{equation*}
 is a $\bar{\Pb}_\eps$-martingale.

 Consider $(\frac{1}{\eps}\Q+\partial_t+\B_\eps)\mu_\eps$, after expanding in $\eps$, the term of order $1/\eps$ is
 \begin{equation*}
 \begin{aligned}
 (I)=&-\frac{1}{\eps}\sum_{k=1}^3c\left[\Gamma^0\Gamma^kP_{k1}^*(i\xi)+\tilde{\Gamma}^0\tilde{\Gamma}^kP_{k2}^*(i\xi)\right]\mu_0
 -\frac{1}{\eps}\mu_0\sum_{k=1}^3c\left[\Gamma^0\Gamma^kP_{k1}(i\xi)+\tilde{\Gamma}^0\tilde{\Gamma}^kP_{k2}(i\xi)\right]\\
 &+\frac{1}{\eps}im_0c^2(\Gamma^0+\tilde{\Gamma}^0)\mu_0
 -\frac{1}{\eps}im_0c^2\mu_0(\Gamma^0+\tilde{\Gamma}^0).
 \end{aligned}
 \end{equation*}
If we choose $\mu_0=F_0\otimes G_0$ with $F_0,G_0$ both satisfying the dispersion relation, i.e.,
\begin{equation*}
Q(\xi)F_0(\xi)=F_0(\xi)Q(\xi), Q(\xi)G_0(\xi)=G_0(\xi)Q(\xi),
\end{equation*}
we can check that $(I)=0$. For the term of order $1/\sqrt{\eps}$, similarly we introduce fast variables $z_1=x_1/\eps, z_2=x_2/\eps$, and define \begin{equation*}
\mu_{1,\eps}(\tilde{A},t,x_1,x_2,\xi_1,\xi_2)=\mu_1(\tilde{A},t,x_1,\frac{x_1}{\eps},x_2,\frac{x_2}{\eps},\xi_1,\xi_2),
\end{equation*} with $\mu_1$ solving
\begin{equation}
\begin{aligned}
\Q\mu_1-&\sum_{k=1}^3c\left[\Gamma^0\Gamma^k P_{k1}^*(i\xi+\frac{D_z}{2})+\tilde{\Gamma}^0\tilde{\Gamma}^kP_{k2}^*(i\xi+\frac{D_z}{2})\right]\mu_1+im_0c^2(\Gamma^0+\tilde{\Gamma}^0)\mu_1\\
-&\mu_1\sum_{k=1}^3c\left[\Gamma^0\Gamma^kP_{k1}(i\xi-\frac{D_z}{2})+\tilde{\Gamma}^0\tilde{\Gamma}^kP_{k2}(i\xi-\frac{D_z}{2})\right]-im_0c^2\mu_1(\Gamma^0+\tilde{\Gamma}^0)\\
=& ie\sum_{k=0}^3(\Gamma^0\Gamma^k\K_{\eps 1}^k+\tilde{\Gamma}^0\tilde{\Gamma}^k\K_{\eps 2}^k)\mu_0-ie\mu_0\sum_{k=0}^3 (\Gamma^0\Gamma^k\tilde{\K}_{\eps 1}^k+\tilde{\Gamma}^0\tilde{\Gamma}^k\tilde{\K}_{\eps 2}^k).
\label{eq:mu1Eq}
\end{aligned}
\end{equation}
We can check that the solution $\mu_1=F_1\otimes G_0+F_0\otimes G_1$, where $F_1,G_1$ solve \eqref{eq:lambda1Eq} with $\lambda_0$ replaced by $F_0,G_0$ respectively.
\begin{remark}
In \eqref{eq:mu1Eq}, we replace $x_i/\eps$ by $z_i$ in the operators $\K_{\eps i}^k, \tilde{\K}_{\eps i}^k$.
\end{remark}
In the same way, we define
\begin{equation*}
\mu_{2,\eps}(\tilde{A},t,x_1,x_2,\xi_1,\xi_2)=\mu_2(\tilde{A},t,x_1,\frac{x_1}{\eps},x_2,\frac{x_2}{\eps},\xi_1,\xi_2),
\end{equation*}
and $\mu_2$ solves
\begin{equation}
\begin{aligned}
\Q\mu_2-&\sum_{k=1}^3c\left[\Gamma^0\Gamma^k P_{k1}^*(i\xi+\frac{D_z}{2})+\tilde{\Gamma}^0\tilde{\Gamma}^kP_{k2}^*(i\xi+\frac{D_z}{2})\right]\mu_2+im_0c^2(\Gamma^0+\tilde{\Gamma}^0)\mu_2\\
-&\mu_2\sum_{k=1}^3c\left[\Gamma^0\Gamma^kP_{k1}(i\xi-\frac{D_z}{2})+\tilde{\Gamma}^0\tilde{\Gamma}^kP_{k2}(i\xi-\frac{D_z}{2})\right]-im_0c^2\mu_2(\Gamma^0+\tilde{\Gamma}^0)\\
=& ie\sum_{k=0}^3(\Gamma^0\Gamma^k\K_{\eps 1}^k+\tilde{\Gamma}^0\tilde{\Gamma}^k\K_{\eps 2}^k)\mu_1-ie\mu_1\sum_{k=0}^3 (\Gamma^0\Gamma^k\tilde{\K}_{\eps 1}^k+\tilde{\Gamma}^0\tilde{\Gamma}^k\tilde{\K}_{\eps 2}^k)+\mathbf{L}\mu_0.
\label{eq:mu2Eq}
\end{aligned}
\end{equation}
We define $\mathbf{L}\mu_0$ such that the RHS of \eqref{eq:mu2Eq} has mean zero so the equation is solvable, i.e.,
\begin{equation*}
\mathbf{L}\mu_0=\E\{-ie\sum_{k=0}^3(\Gamma^0\Gamma^k\K_{\eps 1}^k+\tilde{\Gamma}^0\tilde{\Gamma}^k\K_{\eps 2}^k)\mu_1+ie\mu_1\sum_{k=0}^3 (\Gamma^0\Gamma^k\tilde{\K}_{\eps 1}^k+\tilde{\Gamma}^0\tilde{\Gamma}^k\tilde{\K}_{\eps 2}^k)\}.
\end{equation*}
We decompose $\mathbf{L}\mu_0$ into two parts, i.e., $\mathbf{L}\mu_0=\mathbf{L}_1\mu_0+\mathbf{L}_2\mu_0$, where
\begin{equation*}
\begin{aligned}
\mathbf{L}_1\mu_0=&ie\E\{-\sum_{k=0}^3\Gamma^0\Gamma^k\K_{\eps 1}^kF_1\otimes G_0+F_1\otimes G_0
\sum_{k=0}^3\Gamma^0\Gamma^k\tilde{\K}_{\eps 1}^k-\sum_{k=0}^3\tilde{\Gamma}^0\tilde{\Gamma}^k\K_{\eps 2}^kF_0\otimes G_1+F_0\otimes G_1\sum_{k=0}^3\tilde{\Gamma}^0\tilde{\Gamma}^k\tilde{\K}_{\eps 2}^k\}\\
=&(\Li F_0)\otimes G_0+F_0\otimes (\Li G_0),
\end{aligned}
\end{equation*}
and
\begin{equation*}
\mathbf{L}_2 \mu_0=ie\E\{-\sum_{k=0}^3\Gamma^0\Gamma^k\K_{\eps 1}^kF_0\otimes G_1+F_0\otimes G_1
\sum_{k=0}^3\Gamma^0\Gamma^k\tilde{\K}_{\eps 1}^k-\sum_{k=0}^3\tilde{\Gamma}^0\tilde{\Gamma}^k\K_{\eps 2}^kF_1\otimes G_0+F_1\otimes G_0\sum_{k=0}^3\tilde{\Gamma}^0\tilde{\Gamma}^k\tilde{\K}_{\eps 2}^k\}.
\end{equation*}
By this decomposition, we can write $\mu_2=F_0\otimes G_2+F_2\otimes G_0+\tilde{\mu}_2$, where $F_2,G_2$ solve \eqref{eq:lambda2Eq} with $\lambda_0$ replaced by $F_0,G_0$ and $\tilde{\mu}_2$ solves the following equation
\begin{equation*}
\begin{aligned}
 &\Q\tilde{\mu}_2-\sum_{k=1}^3c\left[\Gamma^0\Gamma^k P_{k1}^*(i\xi+\frac{D_z}{2})+\tilde{\Gamma}^0\tilde{\Gamma}^kP_{k2}^*(i\xi+\frac{D_z}{2})\right]\tilde{\mu}_2+im_0c^2(\Gamma^0+\tilde{\Gamma}^0)\tilde{\mu}_2\\
-&\tilde{\mu}_2\sum_{k=1}^3c\left[\Gamma^0\Gamma^kP_{k1}(i\xi-\frac{D_z}{2})+\tilde{\Gamma}^0\tilde{\Gamma}^kP_{k2}(i\xi-\frac{D_z}{2})\right]-im_0c^2\tilde{\mu}_2(\Gamma^0+\tilde{\Gamma}^0)\\
=& \mathbf{L}_2\mu_0+ie\E\{\sum_{k=0}^3\Gamma^0\Gamma^k\K_{\eps 1}^kF_0\otimes G_1-F_0\otimes G_1
\sum_{k=0}^3\Gamma^0\Gamma^k\tilde{\K}_{\eps 1}^k+\sum_{k=0}^3\tilde{\Gamma}^0\tilde{\Gamma}^k\K_{\eps 2}^kF_1\otimes G_0-F_1\otimes G_0\sum_{k=0}^3\tilde{\Gamma}^0\tilde{\Gamma}^k\tilde{\K}_{\eps 2}^k\}.
\end{aligned}
\end{equation*}
With $\mu_\eps=\mu_0+\sqrt{\eps}\mu_{1,\eps}+\eps\mu_{2,\eps}$, we have
\begin{equation*}
\G_{\mu_0}^{2,\eps}[W](t)=\G_{\mu_0}^2[W](t)+\langle \bar{C}_1,W\otimes W\rangle(t)+\int_0^t ds\langle \sum_{i=1}^4 \bar{C}_i, W\otimes W\rangle(s),
\end{equation*}
and the correctors $\bar{C}_i$ are
\begin{eqnarray*}
&&\bar{C}_1=\sqrt{\eps}\mu_{1,\eps}+\eps\mu_{2,\eps},\\
&&\bar{C}_2=-\partial_t(\sqrt{\eps}\mu_{1,\eps}+\eps\mu_{2,\eps}),\\
&&\bar{C}_3=ie\sqrt{\eps}\sum_{k=0}^3(\Gamma^0\Gamma^k\K_{\eps 1}^k+\tilde{\Gamma}^0\tilde{\Gamma}^k\K_{\eps 2}^k)\mu_{2,\eps}-ie\sqrt{\eps}\mu_{2,\eps}\sum_{k=0}^3(\Gamma^0\Gamma^k\tilde{\K}_{\eps 1}^k+\tilde{\Gamma}^0\tilde{\Gamma}^k\tilde{\K}_{\eps 2}^k),\\
&&\bar{C}_4=-\mathbf{L}_2\mu_0,
\end{eqnarray*}
\begin{equation*}
\begin{aligned}
\bar{C}_5=&\frac{1}{\eps}\sum_{k=1}^3c\left[\Gamma^0\Gamma^kP_{k1}^*(\frac{\eps D_x}{2})+\tilde{\Gamma}^0\tilde{\Gamma}^kP_{k2}^*(\frac{\eps D_x}{2})\right](\sqrt{\eps}\mu_{1,\eps}+\eps\mu_{2,\eps})\\
+&\frac{1}{\eps}(\sqrt{\eps}
\mu_{1,\eps}+\eps\mu_{2,\eps})\sum_{k=1}^3c\left[
\Gamma^0\Gamma^kP_{k1}(-\frac{\eps D_x}{2})+\tilde{\Gamma}^0\tilde{\Gamma}^kP_{k2}(-\frac{\eps D_x}{2})\right].
\end{aligned}
\end{equation*}
\begin{remark}
In $\bar{C}_5$, $D_x$ is with respect to the slow variable.
\end{remark}

To prove that $\G_{\mu_0}^2[W](t)$ is a $\tilde{\Pb}_\eps$-approximating martingale, we need to show that
\begin{equation*}
\|\langle \sum_{i=1}^5\bar{C}_i,\sum_{i=1}^5 \bar{C}_i\rangle(t)\|_{L^\infty(\V)} \leq C_{\mu_0,T}\eps
\end{equation*}
uniformly in $t\in [0,T]$.

\section{Proof of the main theorem}
\label{sec:proof}
In this section, we first prove the convergence of the cross modes and then of the propagating modes. In the end, we prove the tightness result and finish the proof of the main theorem.

From now on, we use the notation $a\les b$ when there exists a constant $M$ such that $a\leq Mb$.

\subsection{Cross modes}
First of all, we show that the cross modes converge to zero weakly almost everywhere. For any test function $F$, we have
\begin{equation*}
\partial_t\langle F,W_\eps\rangle=\langle (\partial_t+\A_\eps)F,W_\eps\rangle.
\end{equation*}
Therefore,
\begin{equation*}
\eps\langle F,W_\eps\rangle(t)-\eps\langle F,W_\eps\rangle(0)=\int_0^t ds\langle (\eps\partial_t+\eps\A_\eps)F,W_\eps\rangle(s).
\end{equation*}
By \eqref{eq:adjointAeps}, we rewrite $\eps\A_\eps F$ and have
\begin{equation*}
\begin{aligned}
\eps \A_\eps F=&(\sum_{k=1}^3c\gamma^0\gamma^ki\xi_k+im_0c^2\gamma^0)F-F(\sum_{k=1}^3i\xi_kc\gamma^0\gamma^k+im_0c^2\gamma^0)\\
-& \eps\sum_{k=1}^3c\gamma^0\gamma^kP_k^*(\frac{D}{2})F-\eps F\sum_{k=1}^3P_k(-\frac{D}{2})c\gamma^0\gamma^k-\sqrt{\eps}ie\sum_{k=0}^3\gamma^0\gamma^k\K_{\eps}^kF+\sqrt{\eps}ie\sum_{k=0}^3\tilde{\K}_{\eps}^kF\gamma^0\gamma^k.
\end{aligned}
\end{equation*}
 Since $\langle W_\eps(t),W_\eps(t)\rangle \leq M$ uniformly in $t\in [0,T]$ and $\tilde{A}\in \V$, $\K_\eps^k,\tilde{\K}_\eps^k$ are bounded operators on $L^2(\R^{2d})$, and $Q(\xi)=\sum_{k=1}^3 \gamma^0\gamma^k\xi_k+m_0c\gamma^0$, we have
\begin{equation*}
|\int_0^T ds \langle F, QW_\eps-W_\eps Q\rangle(s) |=|\int_0^Tds\langle QF-FQ, W_\eps\rangle(s)|\les \sqrt{\eps}.
\end{equation*}
Recall that
\begin{equation*}
W_\eps=\sum_{i,j=1,2} a_{ij}^\eps x_ix_j^*+\sum_{i,j=1,2} b_{ij}^\eps y_iy_j^*+\sum_{i,j=1,2} c_{ij}^\eps x_iy_j^*+\sum_{i,j=1,2} d_{ij}^\eps y_ix_j^*,
\end{equation*}
 so $QW_\eps-W_\eps Q=2\lambda_+\sum_{i,j=1,2} c_{ij}^\eps x_iy_j^*+2\lambda_-\sum_{i,j=1,2} d_{ij}^\eps y_ix_j^*$. If we choose $F=fx_iy_j^*/\lambda_+$ for some good function $f$, we have
 \begin{equation*}
 |\int_0^T ds\langle f, c_{ij}^\eps\rangle(s) |\les \sqrt{\eps}.
 \end{equation*}
 The same is true for $d_{ij}^\eps$ if we choose other test functions. Thus we conclude that the cross modes $c_{ij}^\eps,d_{ij}^\eps$ converge to zero weakly almost everywhere.
 \begin{remark}
 We do not necessarily have the weak convergence of $c_{ij}^\eps, d_{ij}^\eps$ as processes in $\C([0,T];L^2(\R^{2d}))$ here. The following is a heuristic argument.

 We rewrite the equation \eqref{eq:WignerEq} satisfied by $W_\eps$ :
 \begin{equation}
 \begin{aligned}
 &\partial_t W_\eps +\frac{1}{\eps}\left[\sum_{k=1}^3 c\gamma^0\gamma^kP_k(i\xi)W_\eps
 +\sum_{k=1}^3 W_\eps P_k^*(i\xi)c\gamma^0\gamma^k+im_0c^2(\gamma^0W_\eps-W_\eps\gamma^0)\right]
 +\ldots\\
  =&\partial_t W_\eps+\frac{ic}{\eps}(QW_\eps-W_\eps Q)+\ldots=0.
\label{eq:Eqwith1overEps}
  \end{aligned}
 \end{equation}
 All the terms that do not show up here are those of order $1$ or $1/\sqrt{\eps}$. So we have
 \begin{equation*}
 x_i^*\partial_t W_\eps y_j+\frac{ic}{\eps} x_i^*(Q W_\eps-W_\eps Q)y_j+\ldots =0,
 \end{equation*}
 which leads to
 \begin{equation*}
 \partial_t c_{ij}^\eps+\frac{2ic\lambda_+}{\eps}c_{ij}^\eps+\ldots =0.
 \end{equation*}
 We see that as $\eps \to 0$, the cross modes $c_{ij}^\eps$ are highly oscillatory in time. The same happens to $d_{ij}^\eps$. From this perspective, we can only expect the weak convergence as a $L^2$ function rather than a process in $\C([0,T];L^2(\R^{2d}))$.
 \end{remark}

\subsection{Convergence of the expectation}
We first define some notation used in the proof. For two matrix-valued functions $A(y)$ and $B(y)$, we have
\begin{eqnarray*}
&&|A|\leq |B|\Leftrightarrow \forall (m,n), |A_{mn}(y)|\leq |B_{mn}(y)|\\
&&|A|\leq \max_{|y|<M} |B| \Leftrightarrow \forall (m,n), |A_{mn}(y)|\leq \max_{|y|<M}|B_{mn}(y)|\\
&&|A|\leq \|B\|_{L^\infty(\V)}\Leftrightarrow \forall (m,n), |A_{mn}(y)|\leq \|B_{mn}(y)\|_{L^\infty(\V)}
\end{eqnarray*}

We have the following lemmas concerning $\lambda_{1,\eps},\lambda_{2,\eps}$.
\begin{lemma}
$\langle \lambda_{1,\eps}(t),\lambda_{1,\eps}(t)\rangle \leq C_{\lambda_0,T}$ uniformly for $t\in [0,T]$ and $\tilde{A}\in \V$.
\label{lem:lambda1}
\end{lemma}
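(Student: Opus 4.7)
The plan is to combine the explicit solution formula \eqref{eq:lambda1} with three ingredients: unitarity of the operators $e^{-rA_1}$ and $e^{-rA_2}$, the spectral-gap bound \eqref{eq:speGap} applied to the mean-zero source $F_1$, and a Cauchy--Schwarz estimate against the bounded-total-variation random measure $d\tilde A$.

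First I would inspect $A_1,A_2$ from \eqref{eq:A1A2}. Using $P_k(\xi)=\xi_k$ and the Hermiticity of $\gamma^0$ and of the $\gamma^0\gamma^k$, a direct computation puts $A_1,A_2$ in the form $\mp icQ(\cdot)$ with $Q(\cdot)$ Hermitian; hence $-rA_1$ and $-rA_2$ are $i$ times Hermitian, and $e^{-rA_1},e^{-rA_2}$ are unitary on $\CC^4$. Next, $F_1$ is linear in $\tilde A$ and therefore mean-zero under $\pi$. Because $e^{-rA_1},e^{-rA_2}$ do not depend on $\tilde A$, they commute with $e^{r\Q}$, and so
\begin{equation*}
\bigl\|e^{r\Q}\bigl[e^{-rA_1}F_1\,e^{-rA_2}\bigr]\bigr\|_{L^\infty(\V)}
\leq C\,e^{-\alpha r}\|F_1\|_{L^\infty(\V)},
\end{equation*}
which makes the $r$-integral in \eqref{eq:lambda1} absolutely convergent with a bound uniform in $(t,\tilde A)$.

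Using this together with the support property $\mathrm{supp}\,\tilde A_k\subset\{|p|\leq M\}$ and interpreting the $dp$-integral as integration against the matrix-valued measure $d\tilde A_k(t/\eps,\cdot)$, I obtain the pointwise estimate
\begin{equation*}
\bigl|\lambda_{1,\eps}(t,x,\xi)\bigr|
\les \sum_{k=0}^3\int_{|p|\leq M}\bigl(|\lambda_0(x,\xi-\tfrac{p}{2})|+|\lambda_0(x,\xi+\tfrac{p}{2})|\bigr)\,|d\tilde{A}_k(t/\eps,p)|,
\end{equation*}
where $|\cdot|$ denotes the Frobenius norm. Cauchy--Schwarz against the positive measure $|d\tilde A_k|$, whose total mass is bounded uniformly by a constant by the definition of $\V$, then gives
\begin{equation*}
\bigl|\lambda_{1,\eps}(t,x,\xi)\bigr|^2
\les \sum_{k=0}^3\int_{|p|\leq M}\bigl(|\lambda_0(x,\xi-\tfrac{p}{2})|^2+|\lambda_0(x,\xi+\tfrac{p}{2})|^2\bigr)\,|d\tilde{A}_k(t/\eps,p)|.
\end{equation*}
Integrating in $(x,\xi)$, applying Fubini and the translation-invariant change of variables $\xi\mapsto\xi\pm p/2$ to extract $\|\lambda_0\|_{L^2(\R^{2d})}^2$ from the inner integral, and bounding the remaining $p$-mass by $\int|d\tilde A_k|\leq C$, I would conclude $\langle \lambda_{1,\eps}(t),\lambda_{1,\eps}(t)\rangle \les \|\lambda_0\|_{L^2(\R^{2d})}^2$ uniformly in $t\in[0,T]$ and $\tilde A\in\V$, which is the desired bound.

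The main obstacle will be the measure-theoretic bookkeeping: $\tilde A_k(t/\eps,p)$ is the density of a random matrix-valued measure rather than a genuine function, so one must consistently replace pointwise-in-$p$ estimates by estimates against the total variation $|d\tilde A_k|$, and verify that the $L^\infty(\V)$ bound \eqref{eq:speGap} combines with this measure-valued integration to produce a realization-wise estimate uniform in $\hat V\in\V$. This is careful bookkeeping rather than a substantive difficulty, and once set up correctly it feeds directly into the chain of estimates above.
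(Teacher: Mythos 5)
Your proposal is correct and follows essentially the same route as the paper: the explicit formula \eqref{eq:lambda1}, the spectral gap \eqref{eq:speGap} to make the $r$-integral absolutely convergent, the observation that $A_1=-icQ(\xi+\frac{p}{2})$ and $A_2=icQ(\xi-\frac{p}{2})$ so that $e^{-rA_1},e^{-rA_2}$ have uniformly bounded (indeed unitary) entries, and the bounded total variation plus compact support of $\tilde A_k$. One caveat on your final Cauchy--Schwarz step: the bound \eqref{eq:speGap} returns the $L^\infty(\V)$ norm, i.e.\ a supremum over \emph{all} measures in $\V$, not the particular realization $\tilde A(t/\eps,\cdot)$, so after applying $e^{r\Q}$ you cannot integrate against the fixed random measure; the worst-case element of $\V$ may concentrate its total mass $C$ at the $p$ maximizing $|\lambda_0(x,\xi\mp\frac{p}{2})|$, and what the estimate actually delivers is the paper's majorant $C\max_{|p|\le M}|\lambda_0(x,\xi\mp\frac{p}{2})|$. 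You then need $\lambda_0$ to be a Schwartz-class test function (which it is) rather than merely $L^2$ to conclude that this majorant is square integrable; with that adjustment the argument closes exactly as in the paper.
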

\begin{proof}
Recall from \eqref{eq:lambda1}, we have $\lambda_{1,\eps}(t,x,\xi)=\lambda_1(t,x,x/\eps,\xi)$, and
\begin{equation*}
\begin{aligned}
&\lambda_1(t,x,z,\xi)\\
=&-\int_0^\infty dre^{r\Q}\int_{\R^d}\frac{dpe^{iz\cdot p}}{(2\pi)^d}e^{-rA_1(\xi,p)}ie\left[G_t(p)\lambda_0(\xi-\frac{p}{2})-\lambda_0(\xi+\frac{p}{2})G_t(p)\right]e^{-rA_2(\xi,p)}.
\end{aligned}
\end{equation*}

Take one term, for example%, we have
\begin{equation*}
(I)=-\int_0^\infty dr e^{r\Q}\int_{\R^d}\frac{dp\tilde{A}_k(\frac{t}{\eps},p)}{(2\pi)^d}
e^{iz\cdot p}iee^{-rA_1(\xi,p)}\gamma^0\gamma^k\lambda_0(\xi-\frac{p}{2})e^{-rA_2(\xi,p)}.
\end{equation*}
Since $\tilde{A}_k(\frac{t}{\eps},p)dp$ is compactly supported and of bounded total variation, by \eqref{eq:speGap} we have
\begin{equation*}
|(I)|\les
\int_0^\infty dr e^{-\alpha r} \max_{|p|<M} |e^{-rA_1(\xi,p)}\gamma^0\gamma^k\lambda_0(\xi-\frac{p}{2})e^{-rA_2(\xi,p)}|.
\end{equation*}

Because $A_1(\xi,p)=-icQ(\xi+\frac{p}{2}),A_2(\xi,p)=icQ(\xi-\frac{p}{2})$, any element in $e^{-rA_1(\xi,p)}$ and $e^{-rA_2(\xi,p)}$ is uniformly bounded and thus
 \begin{equation*}
    |\left[e^{-rA_1(\xi,p)}\gamma^0\gamma^k\lambda_0(\xi-\frac{p}{2})e^{-rA_2(\xi,p)}\right]_{mn} | \les\sum_{m,n}|\left[\lambda_0(\xi-\frac{p}{2})\right]_{mn}|.
    \end{equation*}
     Since $\lambda_0$ is a good function, the proof is completed.
\end{proof}

\begin{lemma}
$\langle \lambda_{2,\eps}(t),\lambda_{2,\eps}(t)\rangle \leq C_{\lambda_0,T}$ for $t\in [0,T]$ and $\tilde{A}\in \V$.
\label{lem:lambda2}
\end{lemma}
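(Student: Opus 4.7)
The plan is to follow the template of the proof of Lemma \ref{lem:lambda1}, starting from the explicit representation \eqref{eq:lambda2} for $\lambda_{2,\eps}$. The new difficulty compared with $\lambda_{1,\eps}$ is that the source term $F_2$ already contains $\hat{\lambda}_1$, so after substitution one obtains a double integral in the auxiliary ``times'' $(r,r')$ and a double integral in the auxiliary frequencies $(p,q)$. I will argue that each additional layer costs only a multiplicative constant, so that the pointwise bound on the matrix entries of $\lambda_{2,\eps}$ by a Schwartz-class function of $(x,\xi)$ is preserved; the $L^2$ estimate then follows by integration.

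First I would check that $F_2 + \Li\lambda_0(2\pi)^d \delta(p)$ is mean-zero in $\hat{V}$; this is exactly the solvability condition for \eqref{eq:lambda2Eq}, which is built into the definition of $\Li\lambda_0$. The spectral gap \eqref{eq:speGap} then applies to the outer semigroup and supplies a factor $e^{-\alpha r}$ that makes the $r$-integral absolutely convergent. Substituting the explicit formula \eqref{eq:lambda1} for $\hat{\lambda}_1$ into $F_2$, a generic entry of $\lambda_{2,\eps}$ is dominated by a sum of terms of the schematic form
\begin{equation*}
\int_0^\infty\!\!\int_0^\infty\!\! dr\, dr'\, e^{-\alpha(r+r')} \int\!\!\int dp\, dq\, \bigl| e^{-rA_1} M_1(\xi,p,q)\, e^{-r'A_1'} \lambda_0\bigl(\xi \pm \tfrac{p}{2} \pm \tfrac{q}{2}\bigr) e^{-r'A_2'} e^{-rA_2}\bigr|,
\end{equation*}
where $M_1$ collects a product of two $\gamma^0\gamma^k$ factors times the measure densities $d\tilde A_k$. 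Because $A_1 = -icQ(\xi+p/2)$ and $A_2 = icQ(\xi-p/2)$ are anti-Hermitian, all four exponentials are entry-wise uniformly bounded in their arguments and can be pulled outside. The $p,q$ integrations are then controlled by the fact that the measures $d\tilde{A}_k(t/\eps,\cdot)$ have support in $\{|p|\le M\}$ and uniformly bounded total variation, exactly as in the one-layer case.

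For the second piece of the source in \eqref{eq:lambda2Eq}, the $\delta(p)$ collapses the $p$-integral and leaves a single $r$-integral of $e^{r\Q}e^{-rA_1}\Li\lambda_0\, e^{-rA_2}$; here $\Li\lambda_0$ inherits Schwartz decay in $(x,\xi)$ directly from its definition (the internal $r$-integral involves the rapidly decaying $\tilde{R}_{mn}(r,q)$ on compact $q$-support, together with the bounded exponentials of $A_1,A_2$), so this contribution is again controlled pointwise by a Schwartz function of $(x,\xi)$. Combining the two contributions yields an entry-wise bound of $\lambda_{2,\eps}(t,x,\xi)$ by a Schwartz function of $(x,\xi)$, uniform in $t \in [0,T]$ and in $\tilde{A}\in\V$; squaring and integrating over $\R^{2d}$ closes the argument.

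The main obstacle is bookkeeping rather than analysis: unfolding the composition $\lambda_{2,\eps} = \lambda_2[F_2[\lambda_1]]$ produces many matrix factors and two convolutions in $(p,q)$, and one must verify that the Schwartz decay of $\lambda_0$ in $\xi$ survives the shifts $\xi \mapsto \xi \pm p/2 \pm q/2$ and the subsequent $p,q$ integrations. Since $|p|,|q|\le M$, any polynomial weight or derivative in $\xi$ only gains a uniform multiplicative constant, which is precisely the step that closed the proof of Lemma \ref{lem:lambda1} and which closes the present one as well.
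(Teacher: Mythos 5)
Your proposal is correct and follows essentially the same route as the paper's proof: substitute the explicit formula for $\hat{\lambda}_1$ into $F_2$ (the paper writes the result as $\frac{-e^2}{(2\pi)^d}(B_1-B_2)$ and treats $\Li\lambda_0(2\pi)^d\delta(p)=-\E\{F_2\}$ as the $\E B_i$ pieces), extract exponential decay in both auxiliary times from the spectral gap \eqref{eq:speGap}, bound the matrix exponentials entry-wise using $A_1=-icQ(\xi+\frac{p}{2})$, $A_2=icQ(\xi-\frac{p}{2})$, and control the $p,q$ integrations via the compact support and bounded total variation of the measures, reducing everything to the Schwartz decay of $\lambda_0$ under bounded shifts of $\xi$. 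No gaps.
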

\begin{proof}
The proof is similar with the one for $\lambda_{1,\eps}$. Recall that \begin{equation*}
\Q\hat{\lambda}_2-A_1\hat{\lambda}_2-\hat{\lambda}_2A_2=F_2-\E\{F_2\},
\end{equation*} where
\begin{equation*}
F_2=\frac{ie}{(2\pi)^d}\int_{\R^d}G_t(q)\hat{\lambda}_1(p-q,\xi-\frac{q}{2})dq
-\frac{ie}{(2\pi)^d}\int_{\R^d}\hat{\lambda}_1(p-q,\xi+\frac{q}{2})G_t(q)dq,
\end{equation*}
and \begin{equation*}
\hat{\lambda}_1(p,\xi)=-\int_0^\infty dr e^{r\Q}e^{-rA_1(\xi,p)}\left[ieG_t(p)\lambda_0(\xi-\frac{p}{2})-ie\lambda_0(\xi+\frac{p}{2})G_t(p)\right]e^{-rA_2(\xi,p)},
\end{equation*}
therefore we can write $F_2=\frac{-e^2}{(2\pi)^d}(B_1-B_2)$, where
\begin{equation*}
B_1=-\int_{\R^d}dqG_t(q)\int_0^\infty dr e^{r\Q}e^{-rA_1(\xi-\frac{q}{2},p-q)}\left[
G_t(p-q)\lambda_0(\xi-\frac{p}{2})-\lambda_0(\xi+\frac{p}{2}-q)G_t(p-q)\right]e^{-rA_2(\xi-\frac{q}{2},p-q)},
\end{equation*}
and
\begin{equation*}
B_2=-\int_{\R^d}dq\int_0^\infty dr e^{r\Q}e^{-rA_1(\xi+\frac{q}{2},p-q)}\left[
G_t(p-q)\lambda_0(\xi+q-\frac{p}{2})-\lambda_0(\xi+\frac{p}{2})G_t(p-q)\right]
e^{-rA_2(\xi+\frac{q}{2},p-q)}G_t(q).
\end{equation*}
Then we can write
\begin{equation*}
\lambda_2(z,\xi)=-\int_0^\infty dr e^{r\Q}\int_{\R^d}\frac{dpe^{iz\cdot p}}{(2\pi)^d}e^{-rA_1}\frac{-e^2}{(2\pi)^d}(B_1-B_2-\E B_1+\E B_2)e^{-rA_2}.
\end{equation*}
Considering the term $B_1-\E B_1$, we have
\begin{equation*}
\begin{aligned}
&|\int_0^\infty dr e^{r\Q}\int_{\R^d}dpe^{iz\cdot p}e^{-rA_1}(B_1-\E B_1)e^{-rA_2}|\leq \int_0^\infty dr e^{-\alpha r}\|\int_{\R^d}dpe^{iz\cdot p}e^{-rA_1}(B_1-\E B_1)e^{-rA_2}\|_{L^\infty(\V)}\\
\leq & \int_0^\infty dr e^{-\alpha r}\|\int_{\R^d}dpe^{iz\cdot p}e^{-rA_1}B_1e^{-rA_2}\|_{L^\infty(\V)}+\int_0^\infty dr e^{-\alpha r}\|\int_{\R^d}dpe^{iz\cdot p}e^{-rA_1}\E B_1e^{-rA_2}\|_{L^\infty(\V)}.
\end{aligned}
\end{equation*}
Taking $\|\int_{\R^d} dpe^{iz\cdot p}e^{-rA_1}B_1e^{-rA_2}\|_{L^\infty(\V)}$ for example, we have
\begin{equation*}
\begin{aligned}
 &\int_{\R^d}dpe^{iz\cdot p}e^{-rA_1(\xi,p)}B_1e^{-rA_2(\xi,p)}\\
 =-\int_{\R^d}&dpe^{iz\cdot p}e^{-rA_1(\xi,p)}\int_{\R^d}dqG_t(q)\int_0^\infty dr e^{r\Q}e^{-rA_1(\xi-\frac{q}{2},p-q)}\\
 &\left[G_t(p-q)\lambda_0(\xi-\frac{p}{2})-\lambda_0(\xi+\frac{p}{2}-q)G_t(p-q)\right]e^{-rA_2(\xi-\frac{q}{2},p-q)}e^{-rA_2(\xi,p)}.
 \end{aligned}
 \end{equation*}
 Since $G_t(q)=\sum_{k=0}^3\gamma^0\gamma^k\tilde{A}_k(\frac{t}{\eps},q)$, we have
 \begin{equation*}
 \begin{aligned}
&|\int_{\R^d}dq\tilde{A}_{k_1}(\frac{t}{\eps},q)\int_0^\infty dr e^{r\Q}\\
&\int_{\R^d}dpe^{iz\cdot p}e^{-rA_1(\xi,p)}\gamma^0\gamma^{k_1}e^{-rA_1(\xi-\frac{q}{2},p-q)}\gamma^0\gamma^{k_2}
\tilde{A}_{k_2}(\frac{t}{\eps},p-q)\lambda_0(\xi-\frac{p}{2})e^{-rA_2(\xi-\frac{q}{2},p-q)}e^{-rA_2(\xi,p)}|\\
\les&\int_{\R^d}dq|\tilde{A}_{k_1}(\frac{t}{\eps},q)|\int_0^\infty dr e^{-\alpha r}\max_{|p-q|<M}|e^{-rA_1(\xi,p)}\gamma^0\gamma^{k_1}e^{-rA_1(\xi-\frac{q}{2},p-q)}\gamma^0\gamma^{k_2}
\lambda_0(\xi-\frac{p}{2})e^{-rA_2(\xi-\frac{q}{2},p-q)}e^{-rA_2(\xi,p)}|,
\end{aligned}
 \end{equation*}
and so similarly we have
 \begin{equation*}
 \begin{aligned}
 &|\left[e^{-rA_1(\xi,p)}\gamma^0\gamma^{k_1}e^{-rA_1(\xi-\frac{q}{2},p-q)}\gamma^0\gamma^{k_2}
\lambda_0(\xi-\frac{p}{2})e^{-rA_2(\xi-\frac{q}{2},p-q)}e^{-rA_2(\xi,p)}\right]_{mn}|\\
\les &
\sum_{m,n}|\left[\lambda_0(\xi-\frac{p}{2})\right]_{mn}|,
\end{aligned}
\end{equation*}
 which completes the proof.
\end{proof}

By Lemma \ref{lem:lambda1} and \ref{lem:lambda2}, we have $\|\langle C_1,C_1\rangle(t)\|_{L^\infty(\V)} \leq C_{\lambda_0,T}\eps$. For $C_i,i=2,3,4$, the proof is similar so we omit the details here. In summary, we have
\begin{equation*}
\|\langle \sum_{i=1}^4 C_i,\sum_{i=1}^4 C_i\rangle(t)\|_{L^\infty(\V)} \leq C_{\lambda_0,T}\eps
\end{equation*}
uniformly in $t\in [0,T]$, i.e., the $L^2$ norm of $C_i$ is of order $\sqrt{\eps}$, so $\G_{\lambda_0}^1[W](t)$ is an approximating martingale under $\tilde{\Pb}_\eps$, i.e.,
 \begin{equation*}
 |\E^{\tilde{\Pb}_\eps}\{\G^1_{\lambda_0}[W](t)|\F_s\}-\G^1_{\lambda_0}[W](s)|\leq C_{\lambda_0,T}\sqrt{\eps}.
 \end{equation*}

 Now we can prove the following proposition about convergence of expectation.
 \begin{proposition}
 If $(\alpha_+^\eps,\alpha_-^\eps)$ converges weakly to some $(\alpha_+,\alpha_-)$ in $\C([0,T];H_M)$, then $(\E\alpha_+,\E\alpha_-)$ is the weak solution to \eqref{eq:diracEqnew}.
 \label{prop:1stMomentCon}
 \end{proposition}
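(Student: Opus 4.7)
The plan is to combine the approximating martingale inequality \eqref{eq:1stapproMartin} with the assumed weak convergence $(\alpha_+^\eps,\alpha_-^\eps)\Rightarrow(\alpha_+,\alpha_-)$ in $\C([0,T];H_M)$, testing against symbols $\lambda_0$ chosen to extract scalar equations for the propagating modes.

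First, I would set $s=0$ in \eqref{eq:1stapproMartin}, observe that $W_\eps(0)$ is deterministic so that $\G^1_{\lambda_0}[W_\eps](0)=\langle\lambda_0,W_\eps(0)\rangle$ is a deterministic constant, and take expectation on both sides. This yields
\begin{equation*}
\Bigl|\E\langle\lambda_0,W_\eps\rangle(t)-\langle\lambda_0,W_\eps(0)\rangle-\int_0^t\E\langle(\partial_t+\A)\lambda_0,W_\eps\rangle(s)\,ds\Bigr|\leq C_{\lambda_0,T}\sqrt{\eps}
\end{equation*}
for every admissible $\lambda_0$. Next, I would take $\lambda_0^\pm(t,x,\xi)=f(t,x,\xi)\Pi_\pm(\xi)$ for arbitrary real $f\in\C^1([0,T];\mathcal{S}(\R^{2d}))$. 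Since $Q\Pi_\pm=\lambda_\pm\Pi_\pm=\Pi_\pm Q$, these symbols commute with $Q$ as required by the construction of $\lambda_{1,\eps},\lambda_{2,\eps}$. Using $\Tr(\Pi_\pm x_ix_j^*)=\delta_{ij}$ and $\Tr(\Pi_\pm x_iy_j^*)=\Tr(\Pi_\pm y_ix_j^*)=0$, one obtains $\langle\lambda_0^\pm,W_\eps\rangle=\langle f,\alpha_\pm^\eps\rangle$.

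The main computational step is to identify $\langle\A\lambda_0^\pm,W_\eps\rangle$ with the drift and scattering terms of \eqref{eq:tranEqsys}. The first-order part $\tfrac{1}{2}\sum_k\bigl(c\gamma^0\gamma^k D_k\lambda_0^\pm+\tfrac{1}{2}D_k\lambda_0^\pm c\gamma^0\gamma^k\bigr)$ collapses, after tracing against $W_\eps$ and using $\Pi_\pm Q=\lambda_\pm\Pi_\pm$, to the scalar symbol $(c\xi/\lambda_\pm)\cdot\nabla_x f$ paired with $\alpha_\pm^\eps$. The operator $\Li\lambda_0^\pm$ carries matrix exponentials $e^{-rA_1}=e^{ircQ(\xi+p/2)}$ and $e^{-rA_2}=e^{-ircQ(\xi-p/2)}$; inserting $\Pi_+(\cdot)+\Pi_-(\cdot)=I_4$ between the exponentials and the symbols turns the $r$-integrals into Fourier transforms $\hat R_{kk}(c\lambda_+(q)\mp c\lambda_+(\xi),q-\xi)$, while the scalar weights $\omega_k$ and $\tilde\omega_k$ arise from the matrix traces $\Tr[\Pi_\pm(\xi)\gamma^0\gamma^k\Pi_\pm(q)\gamma^0\gamma^k]$ and $\Tr[\Pi_\pm(\xi)\gamma^0\gamma^k\Pi_\mp(q)\gamma^0\gamma^k]$. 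The cancellation identity $\gamma^0\gamma^k Q(\xi)\gamma^0\gamma^k Q(q)+Q(q)\gamma^0\gamma^k Q(\xi)\gamma^0\gamma^k=c_k I_4$ used in the discussion surrounding \eqref{eq:eqa11}--\eqref{eq:eqa22} guarantees that all terms involving $a_{ij},b_{ij}$ with $i\ne j$ drop out, so the limit is closed in $(\E\alpha_+,\E\alpha_-)$ alone.

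Finally, I would pass to the limit $\eps\to0$. Weak convergence of $(\alpha_+^\eps,\alpha_-^\eps)$ in $\C([0,T];H_M)$ together with the uniform bound $\|\alpha_\pm^\eps(t)\|_{L^2}\leq M$ provides uniform integrability, so $\E\langle f,\alpha_\pm^\eps\rangle(t)\to\E\langle f,\alpha_\pm\rangle(t)$ and similarly inside the time integral via Fubini and dominated convergence. The error $C_{\lambda_0,T}\sqrt\eps$ disappears, yielding the weak form of \eqref{eq:tranEqsys} for $(\E\alpha_+,\E\alpha_-)$. The hardest part of this plan is the algebraic identification of $\Li\lambda_0^\pm$ with the explicit scattering operator $\mathcal{T}$ in Theorem~\ref{theo:maintheorem}; all the Dirac-specific structure (Hermiticity of $Q$, spectral projections, degeneracy of the energy bands, and the cross-mode cancellations) enters at this step, while the martingale and convergence arguments become routine once \eqref{eq:1stapproMartin} is in hand.
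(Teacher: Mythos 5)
Your overall route is the same as the paper's: set $s=0$ in \eqref{eq:1stapproMartin}, take expectations, test against $\lambda_0=f\,\Pi_\pm$ (which is exactly the paper's $f_0(x_1x_1^*+x_2x_2^*)$, resp.\ $f_0(y_1y_1^*+y_2y_2^*)$), identify $\Li$ with $\mathcal T$, and pass to the limit using the assumed weak convergence plus the uniform $L^2$ bound. However, there is one concrete gap: you claim that the cancellation identity for $a_{ij},b_{ij}$ with $i\neq j$ already makes the limiting relation ``closed in $(\E\alpha_+,\E\alpha_-)$ alone.'' That identity only removes the \emph{off-diagonal propagating} modes $a_{12},a_{21},b_{12},b_{21}$. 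It does nothing about the \emph{cross} modes $c_{ij}^\eps,d_{ij}^\eps$ (the $x_iy_j^*$ and $y_ix_j^*$ components of $W_\eps$), which do contribute to $\langle\A\lambda_0^\pm,W_\eps\rangle$ and do not vanish pointwise. For instance, in the drift term one encounters $y_j^*(\Pi_+\gamma^0\gamma^k+\gamma^0\gamma^k\Pi_+)x_i=y_j^*\gamma^0\gamma^k x_i$, which is generically nonzero, so the pairing picks up $c_{ij}^\eps$. The paper isolates these contributions as a remainder $(I)$ and disposes of them only in the limit, using the separately established almost-sure bound $|\int_0^T\langle f,c_{ij}^\eps\rangle\,dt|\leq C_{f,T}\sqrt{\eps}$ from the cross-modes subsection. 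Without invoking that bound (or an equivalent argument), your derivation does not yield a closed equation for $(\E\alpha_+,\E\alpha_-)$.

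Two minor points: the theorem only assumes $W_\eps(0)$ converges weakly to a deterministic $W_0(0)$, not that $W_\eps(0)$ is itself deterministic, so $\G^1_{\lambda_0}[W_\eps](0)$ should be handled via $\E\langle\lambda_0,W_\eps(0)\rangle\to\langle\lambda_0,W_0(0)\rangle$; and you should record explicitly that this identifies the initial condition $\E\alpha_\pm(0)=\Tr(\Pi_\pm W_0(0))$, after which uniqueness for the limiting transport system (noted in the paper's Remark 2.8) pins down $(\E\alpha_+,\E\alpha_-)$.
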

\begin{proof}
First of all, we point out that $(\E\alpha_+,\E\alpha_-)$ has a fixed initial condition by our choice of $W_\eps(0,x,\xi)$. By choosing the test function to be $\lambda_0=f_0(x_1x_1^*+x_2x_2^*)$, we have
\begin{equation*}
\langle \lambda_0, \E W_\eps\rangle(0)=\langle f_0,\E \alpha_+^\eps\rangle(0).
\end{equation*}
Since $\langle \lambda_0, \E W_\eps\rangle(0)\to \langle\lambda_0,  W_0\rangle(0)=\langle f_0, \Tr(\Pi_+ W_0)\rangle(0)$ and $ \langle f_0,\E \alpha_+^\eps\rangle(0)\to \langle f_0, \E\alpha_+\rangle(0)$, we obtain $\E\alpha_+(0)=\Tr(\Pi_+ W_0(0))$. The same discussion holds for $\E\alpha_-(0)$.

In the first approximating martingale inequality, we choose $s=0$,
\begin{equation*}
|\langle \lambda_0,\E W_\eps\rangle(t)-\langle \lambda_0,\E W_\eps\rangle(0)-\int_0^tds \langle (\partial_t+\A)\lambda_0, \E W_\eps\rangle(s)|\leq C_{\lambda_0,T}\sqrt{\eps}.
\end{equation*}
By the choice of $\lambda_0$, we further obtain
\begin{equation*}
\begin{aligned}
&\langle \lambda_0,\E W_\eps\rangle(t)-\langle \lambda_0,\E W_\eps\rangle(0)-\int_0^tds\langle \partial_t \lambda_0, \E W_\eps\rangle(s)\\
=&
\langle f_0, \E \alpha_+^\eps\rangle(t)-\langle f_0,\E \alpha_+^\eps\rangle(0)-\int_0^t ds\langle \partial_t f_0, \E\alpha_+^\eps\rangle(s)\\
\to& \langle f_0,\E\alpha_+\rangle(t)-\langle f_0, \E \alpha_+\rangle(0)-\int_0^t ds\langle \partial_t f_0, \E \alpha_+\rangle(s)
\end{aligned}
\end{equation*}
as $\eps \to 0$.

For the term $\int_0^t ds \langle \A\lambda_0,\E W_\eps\rangle(s)$, we need to calculate $x_1^*\A^*(\E W_\eps)x_1+x_2^*\A^*(\E W_\eps)x_2$. After some lengthy algebra, we get
\begin{equation*}
x_1^*\A^*(\E W_\eps)x_1+x_2^*\A^*(\E W_\eps)x_2
=-\frac{c\xi\cdot\nabla_x\E\alpha_+^\eps}{\lambda_+(\xi)}+\mathcal{T}(\E\alpha_+^\eps,\E\alpha_-^\eps)+(I),
\end{equation*}
where $(I)$ includes terms containing $c_{ij}^\eps,d_{ij}^\eps$. By the bound of cross modes, we check that $\int_0^t ds\langle f_0, (I)\rangle(s)\to 0$ as $\eps \to 0$. Thus, we have shown that $(\E\alpha_+,\E\alpha_-)$ is the weak solution to
\begin{equation*}
\partial_t\E\alpha_++\frac{c\xi\cdot \nabla_x\E\alpha_+}{\lambda_+(\xi)}=\mathcal{T}(\E\alpha_+,\E\alpha_-)
\end{equation*}
with the initial condition given by $\E\alpha_\pm(0)=\Tr(\Pi_\pm W_0(0))$.

In the same way, if we choose $\lambda_0=f_0(y_1y_1^*+y_2y_2^*)$, we can show $(\E\alpha_+,\E\alpha_-)$ also satisfies
\begin{equation*}
\partial_t\E\alpha_-+\frac{c\xi\cdot \nabla_x\E\alpha_-}{\lambda_-(\xi)}=\mathcal{T}(\E\alpha_-,\E\alpha_+).
\end{equation*}
By the uniqueness of the solution to the above transport equations, the proof is complete.
\end{proof}

\subsection{Convergence of the second moment}
We now prove the second approximating martingale inequality. Recalling the construction of $\mu_\eps=\mu_0+\sqrt{\eps}\mu_{1,\eps}+\eps\mu_{2,\eps}$, we need to prove that the correctors $\bar{C}_i,i=1,\ldots,5$ are small.

For $\bar{C}_4$, we have the following lemma.
\begin{lemma}
$\langle \bar{C}_4,\bar{C}_4\rangle(t)\leq C_{\mu_0,T}\eps^d$ uniformly in $t\in [0,T]$ and $\tilde{A}\in \V$.
\end{lemma}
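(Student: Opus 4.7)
The plan is to exploit the fact that $\bar{C}_4=-\mathbf{L}_2\mu_0$ is obtained by taking expectations that pair the randomness in a $\K_{\eps i}^k$ with the randomness sitting inside a $\lambda_1$-type factor localized on the \emph{other} variable. After using the two-point correlation of $\tilde{A}$, the resulting function will carry an oscillatory factor of the form $e^{i(x_1-x_2)\cdot p/\eps}$, so that it is concentrated in an $\eps$-neighborhood of the diagonal $\{x_1=x_2\}$. The factor $\eps^d$ in the $L^2$-norm will then come from the Jacobian of the rescaling $y=(x_1-x_2)/\eps$.

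First I would fix one of the four summands in $\mathbf{L}_2\mu_0$, say $\E\{\Gamma^0\Gamma^k\K_{\eps 1}^k(F_0\otimes G_1)\}$. Using the definition of $\K_{\eps 1}^k$ and the explicit representation \eqref{eq:lambda1} of $G_1$ (with $\lambda_0$ replaced by $G_0$), this term splits into an integral over two dual variables $p$ (from $\K_{\eps 1}^k$) and $q$ (the Fourier variable of $G_1$ in its fast variable $z_2=x_2/\eps$), a semigroup integration $\int_0^\infty dr\,e^{r\Q}$, and a product of two factors of the random field $\tilde{A}_k(t/\eps,p)\tilde{A}_l(t/\eps,q)$. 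Treating $e^{r\Q}$ as the Markov transition semigroup and using the stationarity together with the two-point formula $\E\{\tilde{A}_m(t+s,p)\tilde{A}_n(s,q)\}dpdq=(2\pi)^d\tilde{R}_{mn}(t,p)\delta(p+q)dpdq$, the integrand collapses onto $q=-p$ and yields a representation of the form
\begin{equation*}
\bar{C}_{4,1}(t,x_1,x_2,\xi_1,\xi_2)=\int_{\R^d}\frac{dp}{(2\pi)^d}e^{i(x_2-x_1)\cdot p/\eps}\,K(t,p,x_1,x_2,\xi_1,\xi_2),
\end{equation*}
where $K$ is compactly supported in $p$ (since $\hat{V}_k$, hence $\tilde{R}_{kl}$, is supported in $B_L$), smooth in $p$, and uniformly bounded in $(t,x_1,x_2)$ and rapidly decaying in $(\xi_1,\xi_2)$ because $F_0,G_0$ are Schwartz-class test functions and the prefactors $e^{-rA_1},e^{-rA_2}$ are bounded with $r$-integral absolutely convergent by the spectral gap \eqref{eq:speGap}.

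Next I would perform the change of variables $y=(x_1-x_2)/\eps$, with $dx_1=\eps^d dy$, to get
\begin{equation*}
\int_{\R^{4d}}|\bar{C}_{4,1}|^2\,dx_1dx_2d\xi_1d\xi_2
=\eps^d\int_{\R^{4d}}\bigl|\tilde{K}(t,y,x_2+\eps y,x_2,\xi_1,\xi_2)\bigr|^2\,dydx_2d\xi_1d\xi_2,
\end{equation*}
where $\tilde{K}(t,y,\cdot)=\F^{-1}_{p\to y}K(t,p,\cdot)$ is the inverse Fourier transform of $K$ in $p$. Because $K$ is smooth and compactly supported in $p$, $\tilde{K}$ is Schwartz in $y$ uniformly in the remaining arguments, and the $(x_2,\xi_1,\xi_2)$-decay inherited from $F_0\otimes G_0$ makes the right-hand side bounded by a constant $C_{\mu_0,T}\eps^d$, uniformly for $t\in[0,T]$ and $\tilde{A}\in\V$. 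The other three summands in $\mathbf{L}_2\mu_0$ are treated identically, producing oscillations $e^{\pm i(x_1-x_2)\cdot p/\eps}$ and the same $\eps^d$ gain.

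The main obstacle is bookkeeping: one has to keep track of the semigroup action $e^{r\Q}$ on the $\tilde{A}$-dependent factors, correctly identify the single surviving correlation delta $\delta(p+q)$ across all four summands, and make sure the phases line up so that every contribution carries a phase of the form $e^{i(x_1-x_2)\cdot p/\eps}$ (rather than $e^{i(x_1+x_2)\cdot p/\eps}$, which would \emph{not} give concentration on a diagonal). The verification that each term does produce a diagonal phase rests precisely on the structure of $\mathbf{L}_2$: the $\K$ operator and the $\lambda_1$ factor are placed on \emph{different} spatial indices $1$ and $2$ (this was not the case for $\mathbf{L}_1$, which is the part of the drift that survives in the limit), so their fast phases subtract rather than add. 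Once that algebraic point is verified, the $\eps^d$ bound is a routine Fourier/change-of-variables estimate.
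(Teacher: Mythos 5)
Your proposal is correct and follows essentially the same route as the paper: the term $\mathbf{L}_2\mu_0$ pairs the random factors across the two spatial slots, so the correlation formula produces the phase $e^{i(x_1-x_2)\cdot p/\eps}$ with a kernel compactly supported in $p$, and the $L^2$ norm then gains $\eps^{d}$ from concentration near the diagonal $x_1=x_2$. The only (harmless) difference is in the final estimate: you rescale $y=(x_1-x_2)/\eps$ directly on the kernel, whereas the paper squares the expression, factorizes $f(x_1,\cdot)g(x_2,\cdot)$ by a density argument, and extracts the smallness from $\int|\hat{\nu}((p_2-p_1)/\eps)|\,dp \lesssim \eps^d$.
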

\begin{proof}
Recall that
\begin{equation*}
\mathbf{L}_2 \mu_0=ie\E\{-\sum_{k=0}^3\Gamma^0\Gamma^k\K_{\eps 1}^kF_0\otimes G_1+F_0\otimes G_1
\sum_{k=0}^3\Gamma^0\Gamma^k\tilde{\K}_{\eps 1}^k-\sum_{k=0}^3\tilde{\Gamma}^0\tilde{\Gamma}^k\K_{\eps 2}^kF_1\otimes G_0+F_1\otimes G_0\sum_{k=0}^3\tilde{\Gamma}^0\tilde{\Gamma}^k\tilde{\K}_{\eps 2}^k\}.
\end{equation*}
Consider one term, for example
\begin{equation*}
\sum_{k=0}^3\Gamma^0\Gamma^k \K_{\eps 1}^kF_0\otimes G_1=\sum_{k=0}^3\left(\gamma^0\gamma^k\int_{\R^d}\frac{dp\tilde{A}_k(\frac{t}{\eps},p)}{(2\pi)^d}e^{ix\cdot p/\eps}
F_0(x,\xi-\frac{p}{2})\right)\otimes G_1.
\end{equation*}
Since \begin{equation*}
\begin{aligned}
&G_1(t,x,\frac{x}{\eps},\xi)\\
=&-\int_0^\infty dre^{r\Q}\int_{\R^d}\frac{dpe^{ix\cdot p/\eps}}{(2\pi)^d}e^{-rA_1(\xi,p)}ie\left[G_t(p)G_0(\xi-\frac{p}{2})-G_0(\xi+\frac{p}{2})G_t(p)\right]e^{-rA_2(\xi,p)},
\end{aligned}
\end{equation*}
and $G_t(p)=\sum_{k=0}^3\gamma^0\gamma^k\tilde{A}_k(\frac{t}{\eps},p)$, taking one term in the sum and ignoring constant, we have
\begin{equation*}
\begin{aligned}
(I)=&\left(\gamma^0\gamma^m
\int_{\R^d}\frac{dp\tilde{A}_m(\frac{t}{\eps},p)}{(2\pi)^d}e^{ix\cdot p/\eps}
F_0(x,\xi-\frac{p}{2})\right)\otimes\\
&\left(\int_0^\infty dr e^{r\Q}\int_{\R^d}\frac{dp\tilde{A}_n(\frac{t}{\eps},p)}{(2\pi)^d}e^{ix\cdot p/\eps}e^{-rA_1(\xi,p)}\gamma^0\gamma^nG_0(x,\xi-\frac{p}{2})e^{-rA_2(\xi,p)}\right).
\end{aligned}
\end{equation*}
For any element in matrix $\E(I)$, we know that it is a linear combination of terms of the following form
\begin{equation*}
(i)=\int_0^\infty dr \int_{\R^d}dp e^{\frac{i(x_1-x_2)\cdot p}{\eps}}\tilde{R}_{mn}(r,p)f(x_1,\xi_1-\frac{p}{2})
g(x_2,\xi_2+\frac{p}{2})T_1(r,\xi_2,p)T_2(r,\xi_2,p),
\end{equation*}
and $f,g,T_1,T_2$ are from $F_0,G_0,e^{-rA_1(\xi_2,p)},e^{-rA_2(\xi_2,-p)}$ respectively and we can assume they are all real. So
\begin{equation*}
\|(i)\|_{L^2(\R^{2d})}^2
=\int_{\R^{6d}}\int_0^\infty\int_0^\infty
H\tilde{R}_{mn}(s_1,p_1)\tilde{R}_{mn}(s_2,p_2)e^{\frac{i(x_1-x_2)\cdot (p_1-p_2)}{\eps}}ds_1ds_2dp_1dp_2dx_1dx_2d\xi_1d\xi_2,
\end{equation*}
where
\begin{equation*}
H=\prod_{i=1,2}f(x_1,\xi_1-\frac{p_i}{2})\prod_{i=1,2}g(x_2,\xi_2+\frac{p_i}{2})
\prod_{i=1,2}T_i(s_1,\xi_2,p_1)\prod_{i=1,2}T_i(s_2,\xi_2,p_2).
\end{equation*}
By density argument, we can assume $f(x_1,\xi_1)g(x_2,\xi_2)=h_1(x_1+x_2)h_2(x_1-x_2)h_3(\xi_1)h_4(\xi_2)$ for some good function $h_i$, then we have
\begin{equation*}
H=|h_1(x_1+x_2)|^2|h_2(x_1-x_2)|^2\prod_{i=1,2}h_3(\xi_1-\frac{p_i}{2})
\prod_{i=1,2}h_4(\xi_2+\frac{p_i}{2})\prod_{i=1,2}T_i(s_1,\xi_2,p_1)\prod_{i=1,2}T_i(s_2,\xi_2,p_2).
\end{equation*}
Change variables $y_1=x_1+x_2,y_2=x_1-x_2$, and integrate in $y_i$, we have
\begin{equation*}
\begin{aligned}
\|(i)\|_{L^2(\R^{4d})}^2
=C\int_{\R^{4d}}\int_0^\infty\int_0^\infty&\prod_{i=1,2}h_3(\xi_1-\frac{p_i}{2})
\prod_{i=1,2}h_4(\xi_2+\frac{p_i}{2})\prod_{i=1,2}T_i(s_1,\xi_2,p_1)\prod_{i=1,2}T_i(s_2,\xi_2,p_2)\\
&\tilde{R}_{mn}(s_1,p_1)\tilde{R}_{mn}(s_2,p_2)\hat{\nu}(\frac{p_2-p_1}{\eps})
ds_1ds_2dp_1dp_2d\xi_1d\xi_2
\end{aligned}
\end{equation*}
for some constant $C$ and $\hat{\nu}$ is the Fourier transform of of $|h_2|^2$. Since $T_i$ is bounded, integrating in $\xi_1$ and $\xi_2$ yields:
\begin{equation*}
\begin{aligned}
\|(i)\|_{L^2(\R^{4d})}^2
\les\int_{\R^{2d}}\int_0^\infty\int_0^\infty
|\tilde{R}_{mn}(s_1,p_1)\tilde{R}_{mn}(s_2,p_2)\hat{\nu}(\frac{p_2-p_1}{\eps})|
ds_1ds_2dp_1dp_2.
\end{aligned}
\end{equation*}

Changing variable $p_2=p_1+u$, then integrating in $p_1,s_1,s_2$, since $\tilde{R}_{mn}\in \mathcal{S}(\R\times\R^d)$, we have
\begin{equation*}
\|(i)\|_{L^2(\R^{4d})}^2
\les \int_{\R^d}|\hat{\nu}(\frac{u}{\eps})|du.
\end{equation*}
Therefore we see that
\begin{equation*}
\|(i)\|_{L^2(\R^{4d})}^2
\les \eps^d.
\end{equation*}
\end{proof}

Since $\mu_1=F_0\otimes G_1+F_1\otimes G_0$ and $\mu_2=F_0\otimes G_2+F_2\otimes G_0+\tilde{\mu}_2$, and we already have the control for $F_1,G_1,F_2,G_2$, we only have to show \begin{equation*}
\langle \tilde{\mu}_{2,\eps},\tilde{\mu}_{2,\eps}\rangle \leq C_{\mu_0,T}
\end{equation*}
to conclude that $\langle \bar{C}_i, \bar{C}_i\rangle \leq C_{\mu_0,T} \eps$ for $i=1,2,3,5$. The equation satisfied by $\tilde{\mu}_2$ is
\begin{equation*}
\begin{aligned}
 &\Q\tilde{\mu}_2-\sum_{k=1}^3c\left[\Gamma^0\Gamma^k P_{k1}^*(i\xi+\frac{D_z}{2})+\tilde{\Gamma}^0\tilde{\Gamma}^kP_{k2}^*(i\xi+\frac{D_z}{2})\right]\tilde{\mu}_2+im_0c^2(\Gamma^0+\tilde{\Gamma}^0)\tilde{\mu}_2\\
-&\tilde{\mu}_2\sum_{k=1}^3c\left[\Gamma^0\Gamma^kP_{k1}(i\xi-\frac{D_z}{2})+\tilde{\Gamma}^0\tilde{\Gamma}^kP_{k2}(i\xi-\frac{D_z}{2})\right]-im_0c^2\tilde{\mu}_2(\Gamma^0+\tilde{\Gamma}^0)\\
=& \mathbf{L}_2\mu_0+ie\E\{\sum_{k=0}^3\Gamma^0\Gamma^k\K_{\eps 1}^kF_0\otimes G_1-F_0\otimes G_1
\sum_{k=0}^3\Gamma^0\Gamma^k\tilde{\K}_{\eps 1}^k+\sum_{k=0}^3\tilde{\Gamma}^0\tilde{\Gamma}^k\K_{\eps 2}^kF_1\otimes G_0-F_1\otimes G_0\sum_{k=0}^3\tilde{\Gamma}^0\tilde{\Gamma}^k\tilde{\K}_{\eps 2}^k\}.
\end{aligned}
\end{equation*}
We just have to note that the matrices
\begin{eqnarray*}
\mathbf{A}_1:=\sum_{k=1}^3c\left[\Gamma^0\Gamma^k P_{k1}^*(i\xi+\frac{ip}{2})+\tilde{\Gamma}^0\tilde{\Gamma}^kP_{k2}^*(i\xi+\frac{ip}{2})\right]
-im_0c^2(\Gamma^0+\tilde{\Gamma}^0)\\
\mathbf{A}_2:=\sum_{k=1}^3c\left[\Gamma^0\Gamma^kP_{k1}(i\xi-\frac{ip}{2})+\tilde{\Gamma}^0\tilde{\Gamma}^kP_{k2}(i\xi-\frac{ip}{2})\right]
+im_0c^2(\Gamma^0+\tilde{\Gamma}^0)
\end{eqnarray*}
are both of the form $iQ$ for some real symmetric matrix $Q$, thus any element in the matrices $e^{-r\mathbf{A}_1}, e^{-r\mathbf{A}_2}$ is bounded. The rest of the proof is similar to the one for $\lambda_{2,\eps}$.

With the second approximating martingale inequality, we can prove the following proposition.
\begin{proposition}
If $(\alpha_+^\eps,\alpha_-^\eps)$ converges weakly to $(\alpha_+,\alpha_-)$ in $\C([0,T];H_M)$, then $(\alpha_+,\alpha_-)$ is the unique weak solution to \eqref{eq:diracEqnew}.
\label{prop:2ndMomentCon}
\end{proposition}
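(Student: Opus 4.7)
The plan is to use the second approximating martingale inequality \eqref{eq:2ndapproMartin} together with Proposition \ref{prop:1stMomentCon} to show that the second moments of $(\alpha_+,\alpha_-)$ factorize, forcing the limit to be almost surely deterministic and hence to coincide with the unique solution of the transport system \eqref{eq:tranEqsys} identified in Proposition \ref{prop:1stMomentCon}.

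The first step is to choose test functions $\mu_0 = F_0 \otimes G_0$ that isolate products of propagating modes. Setting $F_0 = f_0 \Pi_+$ and $G_0 = g_0 \Pi_+$ with scalar $f_0, g_0 \in \C^1([0,T]; \mathcal{S}(\R^{2d}))$ satisfies the dispersion relation since $Q \Pi_+ = \Pi_+ Q = \lambda_+ \Pi_+$, and a direct trace computation yields
\begin{equation*}
\langle F_0 \otimes G_0, W_\eps \otimes W_\eps\rangle = \langle \bar{f_0}, \alpha_+^\eps\rangle \, \langle \bar{g_0}, \alpha_+^\eps\rangle.
\end{equation*}
Plugging into \eqref{eq:2ndapproMartin} with $s = 0$ and using the corrector estimates $\|\langle \sum_i \bar{C}_i, \sum_i \bar{C}_i\rangle(t)\|_{L^\infty(\V)} \leq C_{\mu_0,T}\eps$ proved in the previous subsection, I would pass to the limit $\eps \to 0$, using the hypothesized weak convergence in $\C([0,T]; H_M)$ together with the a.s.\ vanishing of the cross modes from Section \ref{sec:proof}, to obtain a closed weak equation for the tensor second moment $\E\{\alpha_+ \otimes \alpha_+\}$. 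The computation of $(x_i \otimes x_k)^*[(\A^*\otimes I + I \otimes \A^*)(W_0 \otimes W_0)](x_j \otimes x_l)$, summed over $i=j$ and $k=l$ in the positive spectral subspace, parallels the one-factor computation in the proof of Proposition \ref{prop:1stMomentCon}, and contributions from cross modes in either factor vanish in the limit by the same cancellation mechanism.

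Next I would observe that $\E\{\alpha_+\} \otimes \E\{\alpha_+\}$ automatically satisfies the tensorized first-moment equation, with the deterministic initial datum $\Tr(\Pi_+ W_0(0,x_1,\xi_1))\,\Tr(\Pi_+ W_0(0,x_2,\xi_2))$, since $W_\eps(0)$ is deterministic. This tensor equation admits at most one solution: the energy-dissipation computation from the remark following Theorem \ref{theo:maintheorem} applies verbatim in the product space, giving $\partial_t \|\cdot\|^2 \leq 0$ and hence uniqueness by linearity. Consequently $\E\{\alpha_+\otimes \alpha_+\} = \E\{\alpha_+\} \otimes \E\{\alpha_+\}$. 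Repeating the argument with the combinations $\mu_0 = f_0\Pi_{\pm} \otimes g_0 \Pi_{\pm}$ yields the analogous factorizations for the mixed and the $(-,-)$ second moments.

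Factorization implies $\Var(\langle f, \alpha_\pm\rangle)(t) = 0$ for every $f \in \mathcal{S}(\R^{2d})$ and $t \in [0,T]$, so by testing against a countable dense family and using continuity in $t$ we conclude $(\alpha_+,\alpha_-) = (\E\alpha_+,\E\alpha_-)$ almost surely as elements of $\C([0,T];H_M)$. Proposition \ref{prop:1stMomentCon} identifies this deterministic limit as the unique weak solution of \eqref{eq:tranEqsys}. The main obstacle is the second step: verifying that the limiting equation for $\E\{\alpha_+\otimes\alpha_+\}$ is exactly the tensorization of the first-moment transport equation. This requires careful book-keeping of $\Li$ acting on each factor and a tensor-product analog of the cancellation identity $\gamma^0\gamma^k Q(\xi)\gamma^0\gamma^k Q(q) + Q(q)\gamma^0\gamma^k Q(\xi)\gamma^0\gamma^k = c_k I_4$ to eliminate the cross-mode coefficients arising in both tensor slots; once this algebraic identification is in place, the tensor uniqueness and the variance-vanishing argument deliver the conclusion.
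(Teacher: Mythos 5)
Your overall strategy is the one the paper uses: test $\E\{W_\eps\otimes W_\eps\}$ against $\mu_0=F_0\otimes G_0$ with $F_0=f_0\Pi_+$, $G_0=g_0\Pi_+$ (the paper writes $\Pi_+=x_1x_1^*+x_2x_2^*$ explicitly), invoke the second approximating martingale inequality at $s=0$, pass to the limit, deduce factorization of second moments, and conclude via vanishing variance and Proposition \ref{prop:1stMomentCon}. The initial-condition determinism and the identification of the limiting generator also match the paper.

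There is, however, one genuine flaw in the logical ordering. You describe the limiting equation for $\E\{\alpha_+\otimes\alpha_+\}$ as ``closed'' and then conclude $\E\{\alpha_+\otimes\alpha_+\}=\E\{\alpha_+\}\otimes\E\{\alpha_+\}$ from uniqueness of that single equation, before ``repeating the argument'' for the mixed and $(-,-)$ moments. This does not work as stated: because the scattering operator is $\mathcal{T}(\alpha_+,\alpha_-)$, the term $\alpha_+\otimes\mathcal{T}(\alpha_+,\alpha_-)+\mathcal{T}(\alpha_+,\alpha_-)\otimes\alpha_+$ is a functional of $(\alpha_+\otimes\alpha_+,\ \alpha_+\otimes\alpha_-,\ \alpha_-\otimes\alpha_+)$, so the equation for $\E\{\alpha_+\otimes\alpha_+\}$ contains the unknown mixed second moments and is \emph{not} closed. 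No uniqueness statement for that single equation can yield the factorization. The paper avoids this by first deriving, from all four choices of $F_0,G_0$ built from $\Pi_\pm$, a coupled linear system for the quadruple $(\E\{\alpha_+\otimes\alpha_+\},\E\{\alpha_+\otimes\alpha_-\},\E\{\alpha_-\otimes\alpha_+\},\E\{\alpha_-\otimes\alpha_-\})$, then verifying that the quadruple of tensor products of first moments satisfies the \emph{same system} with the same initial data, and only then invoking uniqueness for the system as a whole. Your argument is repaired by this reordering, but as written the key deduction is circular for each moment taken separately. A secondary, smaller point: the claim that the dissipation estimate from the remark after Theorem \ref{theo:maintheorem} applies ``verbatim'' in the product space is optimistic; the uniqueness needed is for the coupled tensor system, whose dissipation structure has to be rederived (the paper glosses over this too, but does not claim it is verbatim).
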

\begin{proof}
First of all, we claim that $\alpha_\pm(0,x,\xi)=\Tr(\Pi_\pm W_0(0,x,\xi))$. By the proof in Proposition \ref{prop:1stMomentCon}, $\E\alpha_\pm(0,x,\xi)=\Tr(\Pi_\pm W_0(0,x,\xi))$, so we only have to show that $\alpha_\pm(0)$ is deterministic, and this comes from the fact that
\begin{equation*}
\langle f_0\otimes g_0, \E\{\alpha_+\}\otimes \E\{\alpha_+\}\rangle(0)=\langle F_0\otimes G_0, W_0\otimes W_0\rangle(0)= \langle f_0\otimes g_0, \E\{\alpha_+\otimes \alpha_+\}\rangle(0),
\end{equation*}
since they are all the limit of $\langle F_0\otimes G_0, \E\{W_\eps\otimes W_\eps\}\rangle(0)$ and we have chosen $F_0=f_0(x_1x_1^*+x_2x_2^*), G_0=g_0(x_1x_1^*+x_2x_2^*)$. The same discussion holds for $\alpha_-$ if we choose $F_0,G_0$ with $x_i$ replaced by $y_i$.

In the second approximating martingale inequality \eqref{eq:2ndapproMartin}, let $s=0$,
\begin{equation*}
\begin{aligned}
|&\langle F_0\otimes G_0,\E\{W_\eps\otimes W_\eps\}\rangle(t)
-\langle F_0\otimes G_0,\E\{W_\eps\otimes W_\eps\}\rangle(0)\\
&-\int_0^t ds
\langle F_0\otimes \left[(\partial_t +\A)G_0\right]+\left[(\partial_t+\A)F_0\right]\otimes G_0, \E\{ W_\eps \otimes W_\eps\}\rangle (s)|\les \sqrt{\eps}.
\end{aligned}
\end{equation*}
and by the aforementioned $F_0,G_0$, we have
\begin{equation*}
\langle F_0\otimes G_0,\E\{W_\eps\otimes W_\eps\}\rangle=\langle f_0\otimes g_0, \E\{\alpha_+^\eps\otimes\alpha_+^\eps\}\rangle
\end{equation*}
and
\begin{equation*}
\begin{aligned}
&\langle F_0\otimes \left[(\partial_t +\A)G_0\right]+\left[(\partial_t+\A)F_0\right]\otimes G_0, \E\{ W_\eps \otimes W_\eps\}\rangle\\
=&\langle f_0\otimes g_0, \E\{\alpha_+^\eps\otimes (x_1^*\A^*W_\eps x_1+x_2^*\A^* W_\eps x_2)+
(x_1^*\A^*W_\eps x_1+x_2^*\A^* W_\eps x_2)\otimes \alpha_+^\eps\}\rangle\\
+&\langle f_0\otimes \partial_t g_0+\partial_t f_0\otimes g_0, \E\{\alpha_+^\eps\otimes \alpha_+^\eps\}\rangle
\end{aligned}
\end{equation*}
By the same discussion as in Proposition \ref{prop:1stMomentCon}, let $\eps\to 0$, we have
\begin{equation*}
\begin{aligned}
&\langle f_0\otimes g_0, \E\{\alpha_+\otimes \alpha_+\}\rangle(t)
-\langle f_0\otimes g_0, \E\{\alpha_+\otimes \alpha_+\}\rangle(0)\\
=&\int_0^t ds \langle (\partial_t+\frac{c\xi\cdot \nabla_x}{\lambda_+(\xi)})f_0\otimes g_0+f_0\otimes (\partial_t+\frac{c\xi\cdot \nabla_x}{\lambda_+(\xi)})g_0+,\E\{\alpha_+\otimes\alpha_+\}\rangle(s)\\
+&\int_0^t ds\langle f_0\otimes g_0, \E\{\alpha_+\otimes \mathcal{T}(\alpha_+,\alpha_-)+
\mathcal{T}(\alpha_+,\alpha_-)\otimes \alpha_+\}\rangle(s).
\end{aligned}
\end{equation*}
Note that we can define some operator such that $\alpha_+\otimes \mathcal{T}(\alpha_+,\alpha_-)+
\mathcal{T}(\alpha_+,\alpha_-)\otimes \alpha_+$ could be written as a functional of $(\alpha_+\otimes \alpha_+, \alpha_+\otimes \alpha_-,\alpha_-\otimes \alpha_+)$. Therefore, we have derived an equation satisfied by $(\E\{\alpha_+\otimes \alpha_+\}, \E\{\alpha_+\otimes \alpha_-\}, \E\{\alpha_-\otimes \alpha_+\})$. By the result from Proposition \ref{prop:1stMomentCon}, we check that $(\E\alpha_+\otimes \E\alpha_+, E\alpha_+\otimes \E\alpha_-, \E\alpha_-\otimes \E\alpha_+)$ satisfies the same equation.

By choosing other forms of $F_0,G_0$, we can derive an equation system satisfied by $(\E\{\alpha_+\otimes \alpha_+\},\E\{\alpha_+\otimes \alpha_-\},\E\{\alpha_-\otimes \alpha_+\},\E\{\alpha_-\otimes \alpha_-\})$. We check that the same system of equations is also satisfied by $(\E\alpha_+\otimes \E\alpha_+,\E\alpha_+\otimes \E\alpha_-,\E\alpha_-\otimes \E\alpha_+,\E\alpha_-\otimes \E\alpha_-)$, and since they share the same initial condition, the solution is unique, therefore we know $(\alpha_+,\alpha_-)$ is deterministic and satisfies \eqref{eq:tranEqsys}. The proof is complete.
\end{proof}

\subsection{Tightness}
In this section, we prove that $(\alpha_+^\eps,\alpha_-^\eps)$ is tight in $\C([0,T];H_M)$. So together with Proposition \ref{prop:1stMomentCon} and \ref{prop:2ndMomentCon}, we have finished the proof of the main theorem.

\begin{proposition}
$(\alpha_+^\eps,\alpha_-^\eps)$ is tight in $\C([0,T];H_M)$.
\label{prop:tightness}
\end{proposition}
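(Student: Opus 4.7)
The plan is to invoke Proposition \ref{prop:tightcondition} and reduce tightness of $(\alpha_+^\eps, \alpha_-^\eps)$ in $\C([0,T]; H_M)$ to tightness of the real-valued processes $t \mapsto \langle f(t,\cdot), \alpha_\pm^\eps(t,\cdot)\rangle$ in $\C([0,T]; \R)$ for each fixed $f \in \C^1([0,T]; \mathcal{S}(\R^{2d}))$. For these scalar processes I would verify Aldous' criterion: uniform boundedness of the one-dimensional marginals together with the modulus-of-continuity condition
\[
\lim_{\delta \to 0}\, \limsup_{\eps \to 0}\, \sup_{\tau, h \leq \delta}\, \Pb\!\left(|\langle f, \alpha_+^\eps\rangle(\tau + h) - \langle f, \alpha_+^\eps\rangle(\tau)| > \eta\right) = 0,
\]
the supremum being over stopping times $\tau \leq T - h$, for every $\eta > 0$. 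Marginal boundedness is immediate from $\langle f, \alpha_+^\eps\rangle(t) = \langle f\Pi_+, W_\eps(t)\rangle$, Cauchy-Schwarz, and the uniform-in-$\tilde A$ bound $\|W_\eps(t)\|_{L^2(\R^{2d})} \leq M$.

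For the modulus of continuity I would take $\lambda_0(t,x,\xi) = f(t,x,\xi)\Pi_+(\xi)$, which satisfies $Q\lambda_0 = \lambda_0 Q$ since $\Pi_+$ projects onto an eigenspace of $Q$, so the perturbed test function $\lambda_\eps = \lambda_0 + \sqrt{\eps}\lambda_{1,\eps} + \eps\lambda_{2,\eps}$ from Section \ref{sec:outlineproof} is admissible and the corresponding $\G^{1,\eps}_{\lambda_0}[W_\eps]$ is an honest $\bar{\Pb}_\eps$-martingale, call it $M_\eps$. The corrector decomposition of Section \ref{sec:outlineproof} then rewrites
\[
\langle \lambda_0, W_\eps\rangle(t) - \langle \lambda_0, W_\eps\rangle(s) = [M_\eps(t) - M_\eps(s)] + \int_s^t \langle (\partial_u + \A)\lambda_0, W_\eps\rangle(u)\,du + r_\eps(s,t),
\]
where $r_\eps(s,t)$ gathers all the corrector differences. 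The drift integral is bounded by $C_{f,T}|t-s|$ via Cauchy-Schwarz and the $L^2$ bound on $W_\eps$, while Lemmas \ref{lem:lambda1}--\ref{lem:lambda2} together with the analogous $C_2, C_3, C_4$ estimates already derived in Section \ref{sec:proof} give $|r_\eps(s,t)| \leq C_{f,T}\sqrt{\eps}(1+T)$ uniformly in $\tilde A$.

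The remaining ingredient is a second-moment bound on the martingale increment, which I would obtain from the predictable quadratic variation $\langle M_\eps\rangle$ of $M_\eps$ viewed as a martingale for the Markov process $(W_\eps(t), \tilde A(t/\eps))$. Because $\lambda_\eps - \lambda_0 = O(\sqrt{\eps})$ is the only $\hat V$-dependent part of the test function $\phi(\hat V, W) = \langle \lambda_\eps(\hat V), W\rangle$, the $\Q/\eps$-jump contribution to the carr\'e du champ scales as $\eps^{-1}\cdot \eps = O(1)$ uniformly, and the contribution from $\partial_t + \A_\eps$ is controlled by the same kind of bounds used for the $C_i$. Combined, this gives $\E[\langle M_\eps\rangle_t - \langle M_\eps\rangle_s] \leq C_{f,T}|t-s|$, hence $\E|M_\eps(\tau + h) - M_\eps(\tau)|^2 \leq C_{f,T}\, h$ by optional stopping. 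Chebyshev then closes Aldous' criterion, and repeating the same argument with $\Pi_-$ in place of $\Pi_+$ yields tightness of $\langle f, \alpha_-^\eps\rangle$.

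The main obstacle is justifying the $O(|t-s|)$ bound on $\E[\langle M_\eps\rangle_t - \langle M_\eps\rangle_s]$: one must verify carefully that the $O(\sqrt{\eps})$ smallness of the $\hat V$-dependent part of $\lambda_\eps$ precisely cancels the $1/\eps$ jump rate of $\tilde A(\cdot / \eps)$ in the carr\'e du champ. This is essentially the same cancellation that drives the corrector estimate $\|\langle \sum_i C_i, \sum_i C_i\rangle\|_{L^\infty(\V)} \lesssim \eps$ proved earlier, now applied to the square test function $\lambda_\eps \otimes \lambda_\eps$ rather than $\lambda_\eps$ alone.
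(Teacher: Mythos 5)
Your proposal is correct and follows essentially the same route as the paper: reduction via Proposition \ref{prop:tightcondition}, the perturbed test function $\lambda_\eps=\lambda_0+\sqrt{\eps}\lambda_{1,\eps}+\eps\lambda_{2,\eps}$ with $\lambda_0=f\Pi_+$, a martingale-plus-drift-plus-small-corrector decomposition, and the key observation that the $O(\sqrt{\eps})$ size of the $\hat V$-dependent part of $\lambda_\eps$ cancels the $1/\eps$ rate in the increasing process, yielding $\E\{|M_\eps(t)-M_\eps(s)|^2\,|\,\F_s\}\leq C(t-s)$. The only cosmetic difference is that you close with Aldous' criterion while the paper chains the conditional second-moment bounds into the bivariate Kolmogorov--Billingsley condition $\E\{|x_\eps(t)-x_\eps(t_1)|^\gamma|x_\eps(t_1)-x_\eps(s)|^\gamma\}\leq C|t-s|^{1+\beta}$ with $\gamma\in(1,2)$; both rest on the identical estimate.
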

\begin{proof}
By Proposition \ref{prop:tightcondition} we only have to show for any good function $f$ that the process $\langle f,\alpha_\pm^\eps\rangle \in \C([0,T];\R)$ is tight. Take $\alpha_+^\eps$ for example and note that
\begin{equation*}
\langle f,\alpha_+^\eps\rangle=\langle f(x_1x_1^*+x_2x_2^*),W_\eps\rangle.
 \end{equation*}
 Define $\lambda_0=f(x_1x_1^*+x_2x_2^*)$ so we have $\langle f,\alpha_+^\eps\rangle=\langle \lambda_0,W_\eps\rangle$. Recall that
\begin{equation*}
\begin{aligned}
\langle \lambda_0,W_\eps\rangle (t)&=\int_0^t ds \langle (\partial_t+\A)\lambda_0, W_\eps\rangle(s)+\G_{\lambda_0}^{1,\eps}[W_\eps](t)-\langle C_1,W_\eps\rangle(t)-\int_0^tds
\langle C_2+C_3+C_4,W_\eps\rangle(s)\\
&:=x_\eps(t)-y_\eps(t).
\end{aligned}
\end{equation*}
By our previous results, we have
\begin{equation*}
|y_\eps(t)|=|\langle C_1,W_\eps\rangle(t)+\int_0^tds
\langle C_2+C_3+C_4,W_\eps\rangle(s)|\leq C_{f,T} \sqrt{\eps}
 \end{equation*}
 uniformly in $t\in [0,T]$ and $\tilde{A}\in \V$, so $y_\eps(t)\Rightarrow 0$ in $\C([0,T];\R)$. Then we know that if $x_\eps(t)$ has a weakly convergent subsequence, so does $\langle \lambda_0,W_\eps\rangle(t)$, i.e., the relatively compactness of $x_\eps(t)$ implies relatively compactness of $\langle \lambda_0,W_\eps\rangle(t)$. Thus by the Prohorov theorem \cite{billingsley1968convergence} we only have to prove that $x_\eps(t)$ is tight in $\C([0,T];\R)$.

We apply the following sufficient conditions to show $x_\eps(t)$ is tight.
First, a Kolmogorov moment condition \cite{billingsley1968convergence} in the form
\begin{equation}
\E\{ |x_\eps(t)-x_\eps(t_1)|^\gamma
|x_\eps(t_1)-x_\eps(s)|^\gamma\} \leq C_{f,T} |t-s|^{1+\beta},
\label{eq:komocondition}
\end{equation}
with $0\leq s\leq t_1\leq t\leq T$ and $\gamma>0,\beta>0$. Second, we should have
\begin{equation*}
  \lim_{R\to \infty}\limsup_{\eps\to 0}\Pb\{\sup_{0\leq t\leq T}|x_\eps (t)|>R\}=0.
\end{equation*}

Since both $\langle \lambda_0,W_\eps\rangle(t)$ and $y_\eps(t)$ are uniformly bounded, the second condition is automatically satisfied, so we only have to prove \eqref{eq:komocondition} for
\begin{equation*}
x_\eps(t)=\int_0^t ds\langle (\partial_t+\A)\lambda_0,W_\eps\rangle(s)+\G_{\lambda_0}^{1,\eps}[W_\eps](t),
 \end{equation*}
 where $\G_{\lambda_0}^{1,\eps}[W_\eps](t)$ is a martingale. We have
 \begin{equation*}
 \begin{aligned}
 \E\{|x_\eps(t)-x_\eps(s)|^2|\F_s\}&\leq 2\E\{|\int_s^t d\tau \langle (\partial_t+\A)\lambda_0,W_\eps\rangle(\tau)|^2|\F_s\}+2\E\{|\G_{\lambda_0}^{1,\eps}[W_\eps](t)-
 \G_{\lambda_0}^{1,\eps}[W_\eps](s)|^2|\F_s\}\\
 &:=(I)+(II)
 \end{aligned}
 \end{equation*}
 and $(I)\leq C(t-s)^2$. To estimate $(II)$, we only have to calculate the increasing process associated with the sub-martingale $|\G_{\lambda_0}^{1,\eps}[W_\eps](t)|^2$. We check
\begin{equation*}
|\G_{\lambda_0}^{1,\eps}[W_\eps](t)|^2=\mbox{martingale part}+\frac{1}{\eps}\int_0^tds\left(
\Q|\langle \lambda_\eps,W_\eps\rangle|^2-\langle \lambda_\eps, W_\eps\rangle\langle W_\eps, \Q\lambda_\eps\rangle -\langle W_\eps, \lambda_\eps\rangle\langle \Q\lambda_\eps,W_\eps\rangle\right).
\end{equation*}
 Moreover, the integrand $\Q|\langle \lambda_\eps,W_\eps\rangle|^2-\langle \lambda_\eps, W_\eps\rangle\langle W_\eps, \Q\lambda_\eps\rangle -\langle W_\eps, \lambda_\eps\rangle\langle \Q\lambda_\eps,W_\eps\rangle$ is of order $\eps$ by Lemma \ref{lem:lambda1}, \ref{lem:lambda2} and the boundedness of $\Q$. Therefore, we have $(II)\leq C(t-s)$, which leads to
\begin{equation*}
\E\{|x_\eps(t)-x_\eps(s)|^2|\F_s\}\leq C(t-s).
\end{equation*}

In order to obtain \eqref{eq:komocondition},we note that
\begin{equation*}
\begin{aligned}
&\E\{|x_\eps(t)-x_\eps(t_1)|^\gamma|x_\eps(t_1)-x_\eps(s)|^\gamma\}
=\E\{\E\{|x_\eps(t)-x_\eps(t_1)|^\gamma|\F_{t_1}\}|x_\eps(t_1)-x_\eps(s)|^\gamma\}\\
\leq &
\E\{\left(\E\{|x_\eps(t)-x_\eps(t_1)|^2|\F_{t_1}\}\right)^{\frac{\gamma}{2}}|x_\eps(t_1)-x_\eps(s)|^\gamma\}
\leq C(t-t_1)^{\frac{\gamma}{2}}\E\{\E\{|x_\eps(t_1)-x_\eps(s)|^\gamma|\F_s\}\}\\
\leq &
C(t-t_1)^{\frac{\gamma}{2}}\E\{\left(\E\{|x_\eps(t_1)-x_\eps(s)|^2|\F_s\}\right)^{\frac{\gamma}{2}}\}
\leq C(t-t_1)^\frac{\gamma}{2}(t_1-s)^\frac{\gamma}{2}\leq C (t-s)^\gamma.
\end{aligned}
\end{equation*}
By choosing $\gamma\in (1,2)$, we finish the proof.
\end{proof}

\section{Slow time fluctuations}
\label{sec:slowerTime}
In this section, we discuss briefly the case when the temporal random fluctuations are slower than the spatial fluctuations of the potential. This leads to a kinetic regime with elastic collisions as the random fluctuations are now too slow to affect such a scattering; see \cite{bal2010kinetic} for a similar derivation in the setting of Schr\"odinger equations.

More precisely, we consider the Dirac equation \eqref{eq:diracEqnew} with $A_i(\frac{t}{\eps},\frac{x}{\eps})$ replaced by $A_i(\frac{t}{\eps^\alpha}, \frac{x}{\eps})$ for some $\alpha\in (0,1)$. The proof of convergence for $\alpha$ sufficiently large remains almost the same as before except that the infinitesimal generator of $(W_\eps(t),\tilde{A}(t/\eps^\alpha))$ becomes
\begin{equation}
\frac{d}{dh}\E^{\bar{\Pb}_\eps}_{W,\hat{V},t}\{
\langle \lambda(\hat{V}),W\rangle\}(t+h)|_{h=0}
=\langle \frac{1}{\eps^\alpha}\Q\lambda+\partial_t\lambda+\A_\eps\lambda,W\rangle.
 \end{equation}
So our test functions $\lambda_{1}, \lambda_{2}$ in \eqref{eq:lambda1} and \eqref{eq:lambda2} become
\begin{eqnarray*}
&&\lambda_1=-\int_{\R^d}\frac{e^{iz\cdot p}}{(2\pi)^d}\int_0^\infty e^{r\eps^{1-\alpha}\Q}e^{-rA_1}F_1e^{-rA_2}drdp,\\
&&\lambda_2=-\int_{\R^d}\frac{e^{iz\cdot p}}{(2\pi)^d}\int_0^\infty e^{r\eps^{1-\alpha}\Q}e^{-rA_1}(F_2+\Li_\eps\lambda_0(2\pi)^d\delta(p))e^{-rA_2}drdp,
\end{eqnarray*}
where $\Li_\eps\lambda_0$ is defined by replacing $\tilde{R}_{mn}(r,q)$ in $\Li\lambda_0$ by $\tilde{R}_{mn}(r\eps^{1-\alpha},q)$. Thus we see that $\lambda_{1,\eps}$ is of order $\eps^{\alpha-1}$ and $\lambda_{2,\eps}$ is of order $\eps^{2\alpha-2}$. To make those correctors $C_i$ small, we have to choose $\alpha\in (\frac{3}{4},1)$. Note that for $C_1,C_2,C_4$, $\alpha>\frac{1}{2}$ is enough, but for $C_3$, it has to be greater than $\frac{3}{4}$.

In the approximating martingale inequalities \eqref{eq:1stapproMartin} and \eqref{eq:2ndapproMartin}, we replace $\mathcal{A}\lambda_0$ by the $\eps-$dependent operator $\frac{1}{2}\sum_{k=1}^3(c\gamma^0\gamma^kD_k\lambda_0+\frac{1}{2}D_k\lambda_0 c\gamma^0\gamma^k)+\Li_\eps\lambda_0$. The rest of the proof is the same except that when we pass to the limit, we need to calculate $x_1^*\Li^*_\eps x_1+x_2^*\Li^*_\eps x_2$ and this leads to some $\eps-$dependent scattering operator of the form
\begin{equation*}
\begin{aligned}
\mathcal{T}_\eps(\alpha_+,\alpha_-)
=&\frac{e^2}{(2\pi)^d}\int_{\R^d}(\alpha_+(q)-\alpha_+(\xi))\sum_{k=0}^3\omega_k(\xi,q)\eps^{\alpha-1}\hat{R}_{kk}(\frac{c\lambda_+(q)-c\lambda_+(\xi)}{\eps^{1-\alpha}},q-\xi)dq\\
+&\frac{e^2}{(2\pi)^d}\int_{\R^d}(\alpha_-(q)-\alpha_+(\xi))\sum_{k=0}^3\tilde{\omega}_k(\xi,q)\eps^{\alpha-1}\hat{R}_{kk}(\frac{c\lambda_+(q)+c\lambda_+(\xi)}{\eps^{1-\alpha}},q-\xi)dq.
\end{aligned}
\end{equation*}
Following the same type of proof in \cite{bal2010kinetic} and letting $\eps \to 0$, we arrive at the elastic scattering operator
\begin{equation}
\mathcal{T}(\alpha_+)
=e^2\int_{\R^d}(\alpha_+(q)-\alpha_+(\xi))\sum_{k=0}^3\omega_k(\xi,q)\delta(c\lambda_+(q)-c\lambda_+(\xi))\tilde{R}_{kk}(0,q-\xi)dq.
\label{eq:elasticScatter}
\end{equation}
Therefore, in the slow temporal fluctuation case, when $\alpha\in (\frac{3}{4},1)$, we have the following transport equation system of the limit $(\alpha_+,\alpha_-)$:
\begin{equation}
\partial_t\alpha_\pm+\frac{c\xi\cdot \nabla_x\alpha_\pm}{\lambda_\pm(\xi)}=\mathcal{T}(\alpha_\pm),
\label{eq:tranEqSlow}
\end{equation}
where $\mathcal{T}$ is the elastic scattering operator defined in \eqref{eq:elasticScatter}.

We see that the coupling between $\alpha_+$ and $\alpha_-$ appeared in \eqref{eq:tranEqsys} is inactive, and we expect \eqref{eq:tranEqSlow} to hold in the limit of no time-dependent regularization, i.e., formally for $\alpha=0$.
\begin{remark}
The condition that $\alpha>\frac{3}{4}$ could be relaxed somewhat. As a matter of fact, if we construct the test function as
\begin{equation*}
\lambda_\eps=\lambda_0+\sum_{n=1}^N \eps^{\frac{n}{2}}\lambda_{n,\eps},
\end{equation*}
and follow the same procedure, we can show that $\lambda_{n,\eps}$ is of order $\eps^{n\alpha -n}$, and $\frac{n}{2}+n\alpha-n>0$ gives us $\alpha>\frac{1}{2}$. For the corrector
\begin{equation*}
C_3=\frac{1}{\sqrt{\eps}}ie\sum_{k=0}^3 \gamma^0\gamma^k\K_\eps^k \eps^{\frac{N}{2}}\lambda_{N,\eps}-\frac{1}{\sqrt{\eps}}ie\sum_{k=0}^3\tilde{\K}_\eps^k\eps^{\frac{N}{2}}\lambda_{N,\eps}\gamma^0\gamma^k,
\end{equation*}
it is of the order $\eps^{N\alpha-\frac{N+1}{2}}$, and $N\alpha-\frac{N+1}{2}>0$ leads to
\begin{equation*}
\alpha>\frac{1}{2}+\frac{1}{2N}.
\end{equation*}
Therefore, by expanding in higher order, we can relax the assumption to be $\alpha\in (\frac{1}{2},1)$. Note that $\alpha=\frac12$ corresponds to the regime considered in \cite{breteaux2011geometric}.

For slower time fluctuations of the media, i.e., when $\alpha< \frac{1}{2}$ or even $\alpha\to 0$, other techniques than the Markovian regularization considered here presumably need to be developed, and the use of the diagrammatic techniques as in \cite{erdHos2000linear,spohn1977derivation} is currently unavoidable.
\end{remark}

\section{Conclusion and further discussions}
\label{sec:conclusion}
In this paper, we derived the kinetic limit of the Dirac Equation with time-dependent random electromagnetic field. We have shown that the cross modes $c_{ij}^\eps, d_{ij}^\eps$ converged to zero weakly in space while the limiting propagating modes $\alpha_+=a_{11}+a_{22}$ and $\alpha_-=b_{11}+b_{22}$ satisfied a transport equation system. In addition, the temporal regularization brings some new features (inelastic scattering) to the scattering structure, which disappear when the random fluctuations are slower in time.

The method we use relies on the fact that the random field $\tilde{A}(t,p)$ is Markovian in $t$. By constructing appropriate test functions, we prove some approximating martingale inequalities, and together with the tightness result, we pass to the limit. We should mention that our approach is restricted to the $L^2$ case in the sense that the initial data should be appropriately generated such that $\|\Psi^\eps(0)\|_{L^2(\R^d)}^2$ is of order $\eps^{d/2}$.  In the setting of initial conditions $\|\Psi^\eps(0)\|_{L^2(\R^d)}^2$ of order $O(1)$, we do not expect convergence of the energy density to a deterministic limit; see \cite{bal2002radiative} in the setting of the scalar Schr\"odinger equation.

Some generalizations could be obtained by the same approach. We could for instance show that the complete set of modes $\{a_{ij}^\eps, b_{ij}^\eps\}$ converges weakly to the solution to a transport equation system of higher dimensions in \eqref{eq:bigtranEqsys}. The case when $\tilde{R}_{mn}\neq 0$ can also be handled by more strenuous computations. %Since the physically interesting objects are $\alpha_+^\eps$ and $\alpha_-^\eps$, we only present results related to them.

The same approach is expected to extend to linear hyperbolic systems with random coefficients such as those considered in \cite{ryzhik1996transport}. This is currently under study.

\section*{Acknowledgment} This paper was partially funded by AFOSR Grant NSSEFF- FA9550-10-1-0194 and NSF grant DMS-1108608.

%\bibliographystyle{siam}
%\bibliography{ref-dirac,../../bibliography}
\end{document}